\theoremstyle{plain}
\newtheorem{theorem}{Theorem}[section]
\newtheorem{coro}[theorem]{Corollary}
\newtheorem{lemma}[theorem]{Lemma}
\newtheorem{defn}[theorem]{Definition}
\theoremstyle{definition}
\newtheorem{other}{}
\DeclareMathOperator{\dist}{dist}
\DeclareMathOperator{\diam}{diam}
\title{On the first and second eigenvalue of finite and infinite uniform hypergraphs}
\author{Hong-Hai Li\thanks{Supported by National Natural Science Foundation of China
(11561032, 11201198) and CSC, Natural Science Foundation of
Jiangxi Province (20142BAB211013), the Sponsored Program for Cultivating Youths of
Outstanding Ability in Jiangxi Normal University. 
The work was done while this author visited the Simon Fraser University.}\\
College of Mathematics and Information Science\\
Jiangxi Normal University,\\
Nanchang, Jiangxi 330022,  China\\
{\tt lhh@jxnu.edu.cn}
\and
Bojan Mohar\thanks{Supported in part by an NSERC Discovery Grant (Canada), by the Canada Research Chairs program, and by the Research Grant P1--0297 of ARRS (Slovenia).}\\
{Department of Mathematics}\\
{Simon Fraser University}\\
{Burnaby, B.C. V5A 1S6} \\
{\tt mohar@sfu.ca}
}
\begin{document}

\maketitle

\begin{abstract}
Lower bounds for the first and the second eigenvalue of uniform hypergraphs which are regular and linear are obtained.
One of these bounds is a generalization of the Alon-Boppana Theorem to hypergraphs.
\end{abstract}

\noindent {\it MSC classification}\,: 15A18, 05C65

\noindent {\it Keywords}\,: Hypergraph eigenvalue; adjacency
tensor; Alon-Boppana lower bound.

\section{Introduction}

A \emph{hypergraph} $G$ is a pair $(V,E)$, where $E\subseteq \mathcal{P}(V)$ and $\mathcal{P}(V)$ stands for the power set of $V$. The elements of $V=V(G)$, are referred to as \emph{vertices} and the elements of $E=E(G)$ are called \emph{edges}.
A hypergraph $G$ is \emph{$t$-uniform} for an integer $t\geq2$ if every edge $e\in E(G)$ contains precisely $t$ vertices, $|e|=t$.  For a vertex $v\in V$, we denote by $E_v$ the set of edges containing $v$, i.e., $E_v = \{e\in E \mid v\in e\}$. The cardinality $|E_v|$ is the \emph{degree} of $v$. A hypergraph is \emph{regular of degree $k$} if all of its vertices have degree $k$. If any two edges in $H$ share at most one vertex, then $H$ is said to be a \emph{linear hypergraph}.  As implicitly indicated by the definition, where the edges are subsets of the vertex-set, we assume in this paper that hypergraphs are \emph{simple} (i.e., there are no multiple edges). We will also consider (countably) infinite hypergraphs, but only those that are \emph{locally finite}, i.e., each vertex is of finite degree.

A sequence of edges $e_1,\ldots,e_l$ such that $e_i\cap
e_{i+1} \neq \emptyset$ for $i=1,\dots,l-1$ is referred to as a \emph{walk}. The
\emph{length} of a walk is the number of edges in it. We say that a walk
$e_1,\ldots,e_l$ \emph{connects} vertices $u,v\in V$ if $u\in e_1$
and $v\in e_l$. The hypergraph is \emph{connected} if for
each pair of vertices $u,v\in V$, there exists a walk connecting
them. The \emph{distance} between two vertices $u$ and $v$ is
the minimum length of a walk connecting $u$ and
$v$ in $G$, and it is denoted by $\dist(u,v)$. The \emph{diameter} of
$G$, denoted by $\diam(G)$, is the maximum distance between a pair of vertices
in $G$.

To each hypergraph $H$ we associate its \emph{Levi graph} $L(H)$ whose vertices are
$V(H)\cup E(H)$, and each $e\in E(H)$ is adjacent to all vertices $v\in e$.
The hypergraph is \emph{acyclic} if its Levi graph is a forest. Note that, in particular,
this means that $H$ is linear.

In this paper we discuss the largest and the second largest eigenvalue of
finite and infinite hypergraphs. In the case of graphs, it is well-known that the difference between
these two eigenvalues, the \emph{spectral gap}, is intimately
related to expansion properties of the graph. Roughly speaking,
the larger the gap, the closer $G$ is to resemble a random
graph, and the more expanding it is (see, e.g., \cite{KrSh11}).

For $k$-regular graphs of fixed degree $k$, the spectral gap is larger when the
second eigenvalue $\lambda_2(G)$ is smaller. Thus a question arises, how small can $\lambda_2(G)$ be?
The question was answered by the well-known
Alon-Boppana Theorem (see Nilli~\cite{Nilli_dm_1991} or \cite{Mo_ABTheorem}),
which is stated below. Let us observe that there is a strong connection between the second eigenvalue of a $k$-regular
graph and the spectral radius of the infinite $k$-regular tree,
its universal cover, whose spectrum is the interval
$[-2\sqrt{k-1},2\sqrt{k-1}]$ with $2\sqrt{k-1}$ being its spectral
radius.

\begin{theorem}[Alon-Boppana]
\label{thm:Alon-Boppana graphs}
 For every $k$-regular graph of order $n$, its second largest eigenvalue $\lambda_2$ satisfies
\[
\lambda_2\geq 2\sqrt{k-1}-o_n(1)
\]
where $o_n(1)$ is a quantity that tends to zero for every fixed $k$ as $n\rightarrow\infty$.
\end{theorem}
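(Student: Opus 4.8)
The plan is to use the variational (Courant--Fischer) characterization of the second eigenvalue together with a pair of localized trial vectors, one supported near each of two far-apart vertices. First I would dispose of trivialities: if $G$ is disconnected it has at least two components, each contributing the eigenvalue $k$, so $\lambda_2=k\ge 2\sqrt{k-1}$ and we are done; hence assume $G$ is connected, so that $\lambda_1=k$ with the all-ones vector $\mathbf 1$ as its eigenvector, and, writing $R(f)=\langle Af,f\rangle/\langle f,f\rangle$ for the Rayleigh quotient of the adjacency matrix $A$,
\[
\lambda_2=\max\bigl\{\,R(f)\ :\ f\neq 0,\ f\perp\mathbf 1\,\bigr\}.
\]
Because $k$ is fixed, a ball of radius $d$ contains at most $1+k\sum_{j=0}^{d-1}(k-1)^{j}=O((k-1)^{d})$ vertices, so $\diam(G)\ge(1+o_n(1))\log_{k-1}n\to\infty$. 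I would therefore choose a radius $r=r(n)\to\infty$ (say $r\approx\tfrac12\log_{k-1}n$) and two vertices $u,w$ with $\dist(u,w)\ge 2r+3$, which makes the balls $B_r(u)$ and $B_r(w)$, together with their $1$-neighborhoods, pairwise disjoint.

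On each ball I would place the radial trial vector $f_u(x)=(k-1)^{-i/2}$ when $\dist(x,u)=i\le r$ and $f_u(x)=0$ otherwise, and likewise $f_w$; set $\rho=(k-1)^{-1/2}$. The combination $h=\alpha f_u-\beta f_w$ can be forced orthogonal to $\mathbf 1$ by a suitable choice of $\alpha,\beta>0$, and since the supports of $f_u,f_w$ (and of $Af_u,Af_w$) are disjoint, the cross terms vanish and $R(h)\ge\min\{R(f_u),R(f_w)\}$ by the mediant inequality. Thus it suffices to bound a single quotient $R(f_u)$ from below. Letting $S_i$ be the $i$-th distance shell of $u$, $n_i=|S_i|$, $b_i$ the number of edges inside $S_i$, and $e(S_i,S_{i+1})$ the number between consecutive shells, I would expand
\[
\langle Af_u,f_u\rangle=2\sum_i b_i\rho^{2i}+2\rho\sum_i e(S_i,S_{i+1})\rho^{2i},\qquad \langle f_u,f_u\rangle=\sum_i n_i\rho^{2i}.
\]
The crux is then the degree identity $k\,n_i=e(S_{i-1},S_i)+2b_i+e(S_i,S_{i+1})$ coming from $k$-regularity: after substituting it and using $\rho^{2}=\tfrac1{k-1}$, the inter-shell edge contributions telescope and cancel exactly, while the within-shell terms carry a coefficient $\tfrac2k-\rho\le 0$ of the favorable sign (equivalently $(k-2)^2\ge 0$). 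What survives is precisely $2\sqrt{k-1}\,\langle f_u,f_u\rangle$ minus a single boundary term supported on the outermost shell $S_r$.

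The delicate point — and the place where the naive estimate $e(S_i,S_{i+1})\ge n_{i+1}$ is far too lossy — is controlling this boundary term, which has the form $\rho^{2r}$ times an edge count on $S_r$ and is bounded by $\tfrac{k}{k-1}$ in absolute value. I would show it contributes only $o_r(1)$ to $R(f_u)$ by splitting into two regimes: if $\langle f_u,f_u\rangle=\sum_i n_i\rho^{2i}\to\infty$ (the tree-like case, where shells grow geometrically) the correction is $O(1/\langle f_u,f_u\rangle)$; and if this norm stays bounded, then its general term $n_i\rho^{2i}$ must tend to $0$, which forces the boundary term itself to vanish. In either regime $R(f_u)\ge 2\sqrt{k-1}-o_r(1)$, and since $r\to\infty$ as $n\to\infty$ this gives $\lambda_2\ge 2\sqrt{k-1}-o_n(1)$. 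The main obstacle I anticipate is exactly this uniform control of the boundary correction across both regimes; the exact cancellation of the inter-shell edge terms via the degree identity is what lets a single radial trial vector succeed even for graphs that are far from locally tree-like (such as prisms $C_m\times K_2$), where a crude local tree comparison would produce a bound strictly below $2\sqrt{k-1}$.
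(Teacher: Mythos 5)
Your proposal proves the right statement, but note that the paper itself does not prove Theorem~\ref{thm:Alon-Boppana graphs}: it quotes it from Nilli and Mohar, and what it actually proves is the hypergraph generalization (Lemma~\ref{lem_vectororder}, Theorems~\ref{thm_radius} and~\ref{thm_AlonBoppana hypergraphs}), whose $t=2$ specialization is the natural point of comparison. At that level your skeleton coincides with the paper's: two radial test vectors on disjoint balls around far-apart vertices, a signed combination made orthogonal to $\mathbf 1$ (the paper's $y$ with $s=2$, $\omega=-1$ is exactly your $h=\alpha f_u-\beta f_w$), vanishing cross terms, and the mediant inequality. The genuine difference is the engine inside each ball. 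You take pure geometric weights $(k-1)^{-i/2}$ and recover $2\sqrt{k-1}$ from the degree identity $kn_i=e(S_{i-1},S_i)+2b_i+e(S_i,S_{i+1})$; I checked the telescoping and it is exact: the coefficient of $e(S_i,S_{i+1})$ in $\langle Af_u,f_u\rangle-2\sqrt{k-1}\langle f_u,f_u\rangle$ is $2\rho-\tfrac{2\sqrt{k-1}}{k}(1+\rho^2)=0$, the within-shell coefficient is nonnegative precisely because $(k-2)^2\ge0$, and only the outermost boundary term survives. This is essentially Nilli's classical argument. The paper instead perturbs the weights to $g(i)=\bigl(1+(1-\tfrac2k)i\bigr)(k-1)^{-i/2}$ (the $t=2$ case of its $g$), for which the \emph{pointwise} supersolution inequality $\mathcal{A}x\ge\varrho\,x$ of Lemma~\ref{lem_vectororder} holds, and then controls the single boundary error by Stolz--Ces\`aro. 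Your weights do \emph{not} satisfy that pointwise bound (at the center, $(Af_u)(u)=k/\sqrt{k-1}<2\sqrt{k-1}$ for $k\ge3$), which is exactly why you need the summed, telescoped identity instead; conversely, the paper's corrected weights are what make the argument go through for $t$-uniform hypergraphs, where your edge-counting cancellation has no clean analogue. So: same outer structure, genuinely different inner mechanism; yours is shorter and more elementary for graphs, the paper's generalizes.

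The one real gap is where you yourself flagged it: the boundary term. Your dichotomy (norm tends to infinity versus norm stays bounded) is not rigorous, because you are handling a family of finite graphs, not one infinite series; ``a bounded sum has general term tending to $0$'' fails for finite sums. Abstractly, shell weights $m_i:=n_i(k-1)^{-i}$ behaving like $2^{i-r}$ would have bounded sum but last term $1$, so the boundary correction would not vanish, and nothing in your argument excludes this. What excludes it is a monotonicity fact you never state: every vertex of $S_{i+1}$ has a neighbour in $S_i$, so $n_{i+1}\le(k-1)n_i$ for $i\ge1$, hence $m_{i+1}\le m_i$. Then $\sum_{i=1}^r m_i\ge r\,m_r$, and the boundary correction is at most $C_k/r$ uniformly over all $k$-regular graphs, with no case distinction at all. (This monotonicity plays the same role as the inequality $|S_n|\le|S_i|\bigl((t-1)(k-1)\bigr)^{n-i}$ in the paper's proof of Theorem~\ref{thm_radius}, which feeds its Stolz--Ces\`aro step.) With that one line added, your proof is complete.
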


Many authors have attempted to try and develop a parallel theory
of expansion  that applies to hypergraphs, for it has
applications in other fields such as  parallel and distributed
computing (see, e.g., \cite{cataayka_IEEE_1999}). Friedman and
Wigderson~\cite{FriedWigd_combi_95}, and
Chung~\cite{chung_dimacs_93} each proposed an operator attached to
a hypergraph and defined the second largest eigenvalue of
hypergraphs differently. However, they failed to show the
threshold bound for their operators, namely, the bound analogous
to the $2\sqrt{k-1}$ bound for ordinary $k$-regular graphs. For
this, Feng and Li~\cite{FengLi_numbertheory_96} used another
matrix to extend the analogous threshold bound to regular
hypergraphs.  However, since then, no further significant
improvements have been discovered. To study hypergraph
quasirandomness, Lenz and Mubayi~\cite{LenzMuba} adopted the
definitions of the first and second eigenvalues of hypergraphs in
\cite{FriedWigd_combi_95} and showed that, for uniform
hypergraphs, a property \texttt{Eig} involving eigenvalues is
equivalent to property \texttt{Disc}, which states that all
sufficiently large vertex sets have roughly the same edge density
as the entire hypergraph.

The extremal (largest and smallest) eigenvalues of hypergraphs have
also been studied in a recent
enlightening paper of Nikiforov \cite{Nikiforov14}. In this seminal work,
it is noted that in case of hypergraphs, it is usually more natural
to consider more general $L^p$-norms instead of the $L^2$-norms.
The outcomes of our paper confirm that this principle extends to the notion
of the second eigenvalue as well.

In this paper we use the definition essentially identical to that
of Friedman and Wigderson \cite{FriedWigd_combi_95}, but working with the $L^t(V)$ norm for
$t$-uniform hypergraphs rather than $L^2(V)$ norm. Interestingly,
unlike the later work~\cite{FriedWigd_combi_95}, Friedman's early
paper~\cite{Friedman_siamcomp_1991} did this indeed. In
\cite{Friedman_siamcomp_1991}, the notion of the spectrum
of infinite hypergraphs was introduced. A model of the infinite
hypertree, the $t$-uniform $k$-regular hypertree
$T_{t,k}$ was considered and a precise value for
the spectral radius was obtained
\cite[Proposition 3.2]{Friedman_siamcomp_1991}. The detailed proof
is given for the 3-uniform hypertree and the extension to the
$t$-uniform case is indicated at the end of the paper.
Let us recall that the \emph{$t$-uniform $k$-regular hypertree} $T_{t,k}$ is a connected infinite
$t$-uniform acyclic hypergraph in which each vertex has degree $k$.
In other words, its Levi graph is isomorphic to the $(t,k)$-biregular infinite tree.

\begin{theorem}[Friedman \cite{Friedman_siamcomp_1991}] \label{thm_Tktradius}
The spectral radius of\/ $T_{t,k}$ (in its $L^t$ norm) is
$$(k-1)^{1/t}t!(t-1)^{(1-t)/t}.$$
\end{theorem}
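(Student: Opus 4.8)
The plan is to read off $\rho=\rho(T_{t,k})$ from its variational description
\[
\rho = \sup\Bigl\{\,\textstyle\sum_{(v_1,\dots,v_t)} x_{v_1}\cdots x_{v_t} : \|x\|_t = 1,\ x\ge 0 \,\Bigr\},
\]
where the sum runs over all ordered $t$-tuples whose underlying set is an edge (so each edge is counted with all $t!$ of its orderings, which is the source of the factor $t!$), and the supremum is over finitely supported nonnegative $x$. A Lagrange-multiplier computation shows that every extremizer satisfies the eigenvalue equation
\[
(t-1)!\sum_{e\ni v}\ \prod_{u\in e\setminus\{v\}} x_u \;=\; \rho\, x_v^{t-1}\qquad(v\in V),
\]
and by Euler's identity for homogeneous forms the value of the form at an extremizer equals $\rho$. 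I would therefore first look for a positive \emph{radial} solution: fix a root $r$, let $x_v$ depend only on the level $2j$ of $v$ in the Levi tree, and write $x_v=y_j$. Using that a non-root vertex at level $2j$ lies in one edge toward the root (containing the grandparent at level $2j-2$ and $t-2$ siblings at level $2j$) and in $k-1$ edges away from it (each containing $t-1$ children at level $2j+2$), the eigenvalue equation collapses to the two-term recursion
\[
y_{j-1}\,y_j^{\,t-2} + (k-1)\,y_{j+1}^{\,t-1} \;=\; \tfrac{\rho}{(t-1)!}\,y_j^{\,t-1}.
\]

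Next I would insert the geometric ansatz $y_j=\alpha^{\,j}$, which makes the recursion independent of $j$ and reduces it to $\tfrac{\rho}{(t-1)!}=g(\alpha)$ with $g(\alpha)=\alpha^{-1}+(k-1)\alpha^{\,t-1}$. Minimizing the convex function $g$ over $\alpha>0$ gives $\alpha_\ast=\bigl((k-1)(t-1)\bigr)^{-1/t}$ and, after simplification, $g(\alpha_\ast)=t\,(k-1)^{1/t}(t-1)^{(1-t)/t}$, so that the candidate value is $(t-1)!\,g(\alpha_\ast)=t!\,(k-1)^{1/t}(t-1)^{(1-t)/t}$, exactly the claimed number. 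It remains to show that this candidate is simultaneously an upper and a lower bound for $\rho$.

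For the upper bound I would use the positive function $\phi_v=\alpha_\ast^{\,j}$ (with $2j$ the level of $v$) as a supersolution and run an AM--GM ``Schur test''. On each edge $e$, the inequality between arithmetic and geometric means applied to the numbers $(x_v/\phi_v)^t$ gives $\prod_{v\in e}x_v\le \tfrac1t\bigl(\prod_{v\in e}\phi_v\bigr)\sum_{v\in e}(x_v/\phi_v)^t$; summing over edges and regrouping by vertices turns the right-hand side into $\tfrac1t\sum_v (x_v^t/\phi_v^{t-1})\sum_{e\ni v}\prod_{u\in e\setminus\{v\}}\phi_u$. Because $\phi$ satisfies the recursion with equality at every interior vertex and with slack at the root (there $k\alpha_\ast^{\,t-1}\le g(\alpha_\ast)$ is equivalent to $\alpha_\ast^{\,t}\le 1$, which holds), the inner sum is at most $g(\alpha_\ast)\phi_v^{t-1}$, and the whole estimate collapses to $\sum_e\prod_{v\in e}x_v\le \tfrac1t\,g(\alpha_\ast)\|x\|_t^t$ for every finitely supported $x$. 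Multiplying by $t!$ yields $\rho\le t!\,(k-1)^{1/t}(t-1)^{(1-t)/t}$.

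For the matching lower bound I would test the form on the truncation of $\phi$ to the ball of radius $2N$ about $r$. The decisive point is that $\alpha_\ast$ is chosen precisely so that $(k-1)(t-1)\alpha_\ast^{\,t}=1$, which makes both the $L^t$-mass contributed by each level and the product-sum over the edges between consecutive levels constant in $j$; hence $\|x\|_t^t$ and the form both grow linearly in $N$, the last level contributes only an $O(1/N)$ fraction, and the Rayleigh quotient tends to $(t-1)!\,g(\alpha_\ast)=t!\,(k-1)^{1/t}(t-1)^{(1-t)/t}$ as $N\to\infty$. Combining the two bounds proves the theorem. I expect the genuinely delicate step to be the upper bound: unlike the graph case there is no linear operator whose norm one can bound, so the nonlinearity must be absorbed by the weighted AM--GM inequality above, and one must verify that the radial supersolution $\phi$ satisfies the supersolution inequality at \emph{every} vertex, the root included.
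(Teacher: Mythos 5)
Your proposal is correct, but note first that the paper itself contains no proof of this statement to compare against: Theorem \ref{thm_Tktradius} is imported from Friedman \cite{Friedman_siamcomp_1991}, and the authors explicitly remark that Friedman's detailed proof covers only the $3$-uniform case, with the general case sketched. The honest comparison is therefore with the paper's internal machinery, and there your argument genuinely diverges. Your lower-bound half parallels the proof of Theorem \ref{thm_radius} (whose conclusion, multiplied by $(t-1)!$ to pass from the adjacency-tensor normalization to Friedman's, is exactly the inequality $\rho(T_{t,k})\ge (k-1)^{1/t}t!(t-1)^{(1-t)/t}$): both truncate a radial test vector and show the boundary term is negligible. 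The difference is that you use the pure geometric weight $\phi_v=\alpha_*^{\,\dist(r,v)}$ with $\alpha_*=((t-1)(k-1))^{-1/t}$, which suffices on the tree because sphere sizes are known exactly, so the truncated Rayleigh quotient can be computed in closed form and the root discrepancy contributes only $O(1/N)$. The paper must handle \emph{arbitrary} regular hypergraphs, where sphere growth is unknown; there the pure geometric vector fails as a subsolution at the root (one has $k\alpha_*^{t-1}<\alpha_*^{-1}+(k-1)\alpha_*^{t-1}$ unless $t=k=2$), and that is precisely why Lemma \ref{lem_vectororder} multiplies the geometric decay by the linear factor $\hat g(n)$, restoring $\mathcal{A}x\ge\varrho\,x^{[t-1]}$ at \emph{every} vertex. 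Your upper-bound half --- the weighted AM--GM ``Schur test'' against the geometric supersolution, whose root slack $k\alpha_*^{t-1}\le\alpha_*^{-1}+(k-1)\alpha_*^{t-1}$ goes in the favourable direction --- has no counterpart in the paper at all; the paper only ever uses that inequality as a citation, in Corollary \ref{cor:acyclic}. I checked your computations: the radial recursion, the minimization giving $\alpha_*^{-1}+(k-1)\alpha_*^{t-1}=t(k-1)^{1/t}(t-1)^{(1-t)/t}$, the regrouping of the AM--GM estimate over edges, the root verification, and the limit $t!\,(k-1)\alpha_*^{t-1}=t!\,(k-1)^{1/t}(t-1)^{(1-t)/t}$ of the truncated quotient are all correct, and the factor $(t-1)!$ between the two normalizations matches Theorem \ref{thm_Tktradius as tensor}. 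So your argument is a complete, self-contained proof of both inequalities --- which is more than the paper provides for this particular statement.
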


Naturally, this motivates us to think of it analogously as in Theorem \ref{thm:Alon-Boppana graphs} to state
a threshold bound for the second eigenvalue of regular
hypergraphs.  Indeed, it is shown in Section \ref{sect:main results} that there is an
exact analogy to the graph case. We use it first to set a lower
bound for the first eigenvalue of infinite regular hypergraphs in
terms of this threshold bound (see Theorem \ref{thm_radius}). A
direct corollary of this bound is that the spectral radius of any
(finite or infinite) acyclic hypergraph with maximum degree at
most $k$ is at most $\frac{t}{t-1}((t-1)(k-1))^{1/t}$, see
Corollary \ref{cor:acyclic}.

In addition, we set up a version of the Alon-Boppana Theorem for finite
hypergraphs, and indicate the agreement with the graph case
(see Theorems \ref{thm_AlonBoppana hypergraphs} and \ref{thm_Serre hypergraphs}).

\section{Eigenvalues of hypergraphs}

In this section, we give the definitions of the first and second eigenvalues of a hypergraph.
These definitions are essentially identical to those
given in \cite{FriedWigd_combi_95} but with $L^t(V)$ norm for $t$-uniform hypergraphs,
and our concepts fit well with the algebraic definition of eigenvalues of tensors
proposed independently by Qi~\cite{qi05} and Lim\cite{Lim05}.
The $t$-spectral radius through the variation of the multilinear form for $t$-uniform hypergraphs was also studied in \cite{KeevLenzMubayi14,KangNikiYuan2015,Nikiforov14}.

\begin{defn}
Let $W_1, W_2,\ldots, W_t$ be finite-dimensional vector spaces over $\mathbb{C}$, and let $\phi: W_1\times\cdots\times W_t\rightarrow \mathbb{C}$ be a $t$-linear map. The \emph{spectral norm} of $\phi$ is
\[
    \Vert\phi\Vert = \sup \{|\phi(x_1,\ldots,x_t)| : x_i\in W_i,~\|x_i\|_t=1,~i=1,\dots,t\}.
\]
\end{defn}

If $\phi$ is symmetric (and $W=W_1=\cdots=W_t$), then it is easy to see that $\Vert\phi\Vert$ is attained with $x_1=\cdots=x_t$, i.e.
$$\Vert\phi\Vert = \sup \{|\phi(x,\ldots,x)| : x\in W,~\|x\|_t=1\}.$$

\begin{defn}\label{defn_radius_Fried}
Let $H$ be a $t$-uniform hypergraph. The \emph{adjacency map} of $H$ is the symmetric $t$-linear map $\tau_H : W^t\rightarrow \mathbb{C}$ defined as follows, where $W = \mathbb{C}^{V(H)}$. For all $v_1,\ldots,v_t\in V(H)$
\begin{equation}
\label{eq:tauH}
   \tau_H(e_{v_1},\ldots,e_{v_t})=\left\{
                                 \begin{array}{cl}
                                   \frac{1}{(t-1)!}, & \hbox{if $\{v_1,\ldots,v_t\}\in E(H)$;} \\[1mm]
                                   0, & \hbox{otherwise,}
                                 \end{array}
                               \right.
\end{equation}
where $e_v$ denotes the indicator vector of the vertex $v$, that is the vector which has $1$ in coordinate $v$ and $0$ in all other coordinates.  The adjacency map $\tau_H$ is then extended by linearity to $W^t$.

Similarly, we define the all-ones map $J : W^t\rightarrow \mathbb{C}$ to be $J(e_{v_1},\ldots,e_{v_t})=1$ for the standard basis vectors $e_{v_1},\ldots,e_{v_t}$ of $W$, and then extend it by linearity to all of the domain.

The \emph{largest eigenvalue} (or \emph{spectral radius}) of $H$ is defined to be $\rho(H)=\Vert\tau_H\Vert$,
and the \emph{second largest eigenvalue} of $H$ is $\lambda_2(H) = \Vert\tau_H-\frac{t|E(H)|}{n^t}J\Vert$.
\end{defn}

Let us recall the tensor algebra setup introduced by Qi~\cite{qi05}, which can be used in spectral hypergraph theory.

An \emph{$n$-dimensional real tensor} $\mathcal{T}$ of \emph{order} $t$ consists of
$n^t$ entries of real numbers:
\[
\mathcal{T}=(\mathcal{T}_{i_1i_2\cdots
i_t}),\,\,\mathcal{T}_{i_1i_2\cdots i_t}\in \mathbb{R},\qquad i_1,i_2,\ldots,i_t\in \{1,\dots,n\}.
\]
For tensors corresponding to hypergraphs, we shall adopt the common setup from graph theory, where the coordinates correspond to vertices of the (hyper)graph $H$, and thus the indices $i_1,i_2,\ldots,i_t\in \{1,\dots,n\}$ are replaced by vertices,
$v_1,v_2,\ldots,v_t\in V(H)$, and $n=|V(H)|$. Then we say that $\mathcal T$ is a \emph{tensor of order $t$ over} $V=V(H)$.
A tensor $\mathcal{T}$ over $V$ is \emph{symmetric} if the value of $\mathcal{T}_{v_1v_2\cdots v_t}$ is invariant under any
permutation of the indices $v_1,\ldots,v_t$.

By considering $x\in \mathbb{C}^V$ as a tensor of order 1, we can consider the product $\mathcal{T}x$, which is a vector in $\mathbb{C}^V$, whose $v$-component is
\begin{equation}\label{e-produdefn}
(\mathcal{T}x)_v = \sum_{v_2,\ldots,v_t\in V} \mathcal{T}_{v v_2 \cdots v_t}\, x_{v_2}\cdots x_{v_t}.
\end{equation}

A tensor $\mathcal{T}$ of order $t$ over $V$ defines a homogeneous polynomial of degree $t$ with variables $x_v$ ($v\in V$) given by the formula
\begin{equation}\label{e-multilinear form}
x^T(\mathcal{T}x) = \sum_{v_1,\ldots,v_t\in V} \mathcal{T}_{v_1 \cdots v_t}\, x_{v_1}\cdots x_{v_t}.
\end{equation}

These definitions extend to the case when $V$ is infinite, and we shall use these only for the case when the sum in (\ref{e-produdefn}) is finite for each $v$ (the hypergraph $H$ is locally finite) and $x$ is such that the form (\ref{e-multilinear form}) is convergent.

\begin{defn}
Let $\mathcal{T}$ be a tensor of order $t$ over $V$. Then $\lambda$ is an \emph{eigenvalue} of
$\mathcal{T}$ and $0\neq x\in \mathbb{C}^V$ is an \emph{eigenvector} corresponding to $\lambda$ if
$(\lambda,x)$ satisfies
\begin{equation*}
\mathcal{T}x = \lambda x^{[t-1]},
\end{equation*}
where $x^{[t-1]}\in \mathbb{C}^V$ is defined as $(x^{[t-1]})_v = (x_v)^{t-1}$.
\end{defn}



Let $\mathcal{T}$ be a nonnegative tensor. If $\mathcal T$ is finite-dimensional, then the \emph{spectral radius} of $\mathcal{T}$ is defined as $\rho(\mathcal{T})=\max\{|\lambda|\,:\, \lambda \hbox{ is an eigenvalue of } \mathcal{T}\}$.
If $\mathcal T$ is infinite-dimensional, then we define the \emph{spectral radius} of $\mathcal T$ as
$$
   \rho(\mathcal T) = \sup \{\rho(\mathcal T_U) \mid U\subset V,~U\hbox{ finite}\},
$$
where $\mathcal T_U$ is the tensor over $U$ which coincides with $\mathcal T$ for every $t$-tuple of elements from $U$.

For $x\in \mathbb{C}^V$, we define the \emph{$t$-norm} $\Vert x\Vert_t$ of $x$ by
$$
   \Vert x\Vert_t = \biggl(\, \sum_{v\in V} |x_v|^t \biggr)^{1/t}.
$$

Let $\mathbb{R}_{+}^V = \{x\in \mathbb{R}^V \ | \ x\ge 0\}$. The following lemma is an analogue of the basic form of the Perron-Frobenius Theorem for nonnegative matrices. It was proved for the finite-dimensional case in \cite{HuQi13normLap}, and it extends easily to the infinite-dimensional case.

\begin{lemma}\label{lem-radinonnegsym}
Let $\mathcal{T}$ be a symmetric nonnegative tensor of order $t$ over $V$. Then
\begin{equation}\label{eradiusnonsymten}
  \rho(\mathcal{T})=\sup\{x^T(\mathcal{T}x) \mid x\in \mathbb{R}_{+}^V, \Vert x\Vert_t = 1\}.
\end{equation}
If $\mathcal T$ is finite-dimensional, then the supremum in $(${\rm \ref{eradiusnonsymten}}$)$ is attained, and $x\in \mathbb{R}_{+}^V$ with $\Vert x\Vert_t = 1$ is an eigenvector of $\mathcal{T}$ corresponding to $\rho(\mathcal{T})$ if and only if it is an optimal solution of the maximization problem \eqref{eradiusnonsymten}.
\end{lemma}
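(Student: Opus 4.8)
The plan is to establish the finite-dimensional statement first (which is the content cited from \cite{HuQi13normLap}) and then to obtain the infinite-dimensional case by a monotone exhaustion argument. Write $f(x)=x^T(\mathcal{T}x)$ and $R=\sup\{f(x)\mid x\in\mathbb{R}_{+}^V,\ \Vert x\Vert_t=1\}$. For finite $V$ the feasible set $\{x\in\mathbb{R}_{+}^V:\Vert x\Vert_t=1\}$ is compact and $f$ is continuous, so the supremum is attained at some $x^\ast\ge 0$. Because $\mathcal{T}$ is symmetric, $\partial f/\partial x_v = t\,(\mathcal{T}x)_v$, while the constraint $g(x)=\sum_v x_v^t$ has $\partial g/\partial x_v = t\,x_v^{t-1}$; the first-order (Lagrange) conditions therefore read $(\mathcal{T}x^\ast)_v=\lambda\,(x_v^\ast)^{t-1}$ for all $v$, i.e.\ $\mathcal{T}x^\ast=\lambda (x^\ast)^{[t-1]}$. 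Multiplying by $x_v^\ast$ and summing, Euler's identity for the homogeneous form $f$ gives $\lambda = f(x^\ast)=R$, so $R$ is itself an eigenvalue and, being nonnegative, satisfies $R\le\rho(\mathcal{T})$.

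For the reverse inequality I would use the nonnegativity of $\mathcal{T}$. If $(\lambda,x)$ is any eigenpair (with $x$ possibly complex), then applying the triangle inequality to $(\mathcal{T}x)_v=\lambda x_v^{t-1}$ coordinatewise yields $|\lambda|\,|x_v|^{t-1}\le(\mathcal{T}|x|)_v$, where $|x|$ denotes the coordinatewise modulus. Normalising $y=|x|/\Vert |x|\Vert_t$, multiplying by $y_v$ and summing gives $|\lambda|\le f(y)\le R$; taking the maximum over all eigenvalues shows $\rho(\mathcal{T})\le R$, hence $\rho(\mathcal{T})=R$ in the finite case. The same computation settles the ``if and only if'': an optimal $x\ge 0$ satisfies the eigenequation with eigenvalue $R=\rho(\mathcal{T})$ by the argument above, and conversely any eigenvector $x\ge0$ with $\Vert x\Vert_t=1$ for $\rho(\mathcal{T})$ gives $f(x)=\rho(\mathcal{T})\Vert x\Vert_t^t=\rho(\mathcal{T})=R$, so it is optimal.

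To pass to the infinite-dimensional case, recall that there $\rho(\mathcal{T})=\sup_U\rho(\mathcal{T}_U)$ over finite $U\subset V$, and I would prove $\rho(\mathcal{T})=R$ by two inequalities. For $\rho(\mathcal{T})\le R$: given a finite $U$ and an optimal $y\in\mathbb{R}_{+}^U$ for $\mathcal{T}_U$, extend it by zeros to $\tilde y\in\mathbb{R}_{+}^V$; then $\Vert\tilde y\Vert_t=1$ and $\tilde y^T(\mathcal{T}\tilde y)=y^T(\mathcal{T}_U y)=\rho(\mathcal{T}_U)$, because every extra term contains a zero coordinate, whence $\rho(\mathcal{T}_U)\le R$ and the supremum over $U$ gives $\rho(\mathcal{T})\le R$. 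For $R\le\rho(\mathcal{T})$: fix $x\in\mathbb{R}_{+}^V$ with $\Vert x\Vert_t=1$, choose finite sets $U_1\subset U_2\subset\cdots$ exhausting the support of $x$, and let $x_{U_n}$ be the restriction (zero outside $U_n$). Since $x\ge 0$ and $\mathcal{T}\ge0$, every term of $f$ is nonnegative, so $f(x_{U_n})=x_{U_n}^T(\mathcal{T}_{U_n}x_{U_n})$ increases monotonically to $f(x)$; by the finite case each term is at most $\rho(\mathcal{T}_{U_n})\Vert x_{U_n}\Vert_t^t\le\rho(\mathcal{T})$, and letting $n\to\infty$ yields $f(x)\le\rho(\mathcal{T})$. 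Taking the supremum over $x$ gives $R\le\rho(\mathcal{T})$.

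The substantive difficulties are two. In the finite case the delicate point is justifying the first-order conditions when the maximiser lies on the boundary of the positive orthant (some $x_v^\ast=0$), where the constraint $x\ge0$ is active and the naive Lagrange argument needs care; this is exactly what \cite{HuQi13normLap} handles, and I would simply invoke it. In the infinite case the only real issue is convergence, and here the crucial feature is that nonnegativity of both $x$ and $\mathcal{T}$ makes the partial sums monotone, so the exhaustion limit is automatic and no cancellation can occur; this is also what makes the argument valid precisely under the convergence hypothesis imposed on the form \eqref{e-multilinear form}. Note finally that the supremum need not be attained when $V$ is infinite, which is why attainment and the eigenvector characterisation are asserted only in the finite-dimensional case.
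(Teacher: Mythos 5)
Your proposal is correct, and it follows the same route the paper itself takes: the paper offers no written proof of this lemma, merely citing \cite{HuQi13normLap} for the finite-dimensional case and asserting that it ``extends easily to the infinite-dimensional case.'' Your finite-dimensional sketch (Lagrange/KKT stationarity giving $\mathcal{T}x^\ast=\lambda (x^\ast)^{[t-1]}$ with $\lambda=R$ by Euler's identity, plus the modulus/triangle-inequality argument $|\lambda|\,|x_v|^{t-1}\le(\mathcal{T}|x|)_v$ for the reverse bound, with the boundary-of-orthant subtlety deferred to the citation) is exactly the content of the cited result, and your two-sided exhaustion argument --- extending finite optimizers by zeros to get $\rho(\mathcal{T})\le R$, and truncating a nonnegative $x$ to finite sets with monotone convergence of the nonnegative form to get $R\le\rho(\mathcal{T})$ --- is precisely the ``easy extension'' the paper leaves unwritten, so your write-up in fact supplies details the paper omits.
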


We refer to Nikiforov \cite{Nikiforov14} for overview of combinatorial results using the spectral radius of hypergraphs.

For a $t$-uniform hypergraph $H = (V,E)$, we define its \emph{adjacency tensor} as the tensor $\mathcal A$ of order $t$ over $V$, whose $(v_1, \ldots,v_t)$-entry is:
\begin{eqnarray*}
{\mathcal A}_{v_1 \ldots v_t} = \tau_H(e_{v_1},\ldots,e_{v_t}) = \left\{\begin{array}{cl}\frac{1}{(t-1)!} &\mbox{if}\;\{v_1,\ldots,v_t\}\in E,\\[0.5mm]
                      0 & \mbox{otherwise}.\end{array}\right.
\end{eqnarray*}

Let us observe that the adjacency tensor defined above differs from the multilinear
form associated with hypergraphs in \cite{Friedman_siamcomp_1991}
by the scalar factor $\frac{1}{(t-1)!}$. In this sense, Theorem~\ref{thm_Tktradius} gives the spectral radius
of the infinite regular hypertree in terms of its adjacency tensor introduced above.

\begin{theorem}[Friedman \cite{Friedman_siamcomp_1991}] \label{thm_Tktradius as tensor}
The spectral radius of the adjacency tensor of the infinite $k$-regular hypertree\/ $T_{t,k}$ is
$$\rho(T_{t,k}) = \frac{t}{t-1}((t-1)(k-1))^{1/t}.$$
\end{theorem}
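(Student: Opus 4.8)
The plan is to read Theorem~\ref{thm_Tktradius as tensor} as a rescaling of Friedman's Theorem~\ref{thm_Tktradius}, using that the adjacency tensor $\mathcal{A}$ of $T_{t,k}$ equals $\frac{1}{(t-1)!}$ times the multilinear form $M$ of \cite{Friedman_siamcomp_1991}. First I would record, via Definition~\ref{defn_radius_Fried} together with Lemma~\ref{lem-radinonnegsym}, that for the symmetric nonnegative tensor $\mathcal{A}$ the spectral radius coincides with the spectral norm of the adjacency map,
\[
\rho(T_{t,k}) = \rho(\mathcal{A}) = \sup\bigl\{\, x^T(\mathcal{A}x) : x\in\mathbb{R}_{+}^{V},\ \Vert x\Vert_t = 1 \,\bigr\},
\]
and that the corresponding quantity for $M$ is exactly Friedman's $L^t$-norm spectral radius. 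Because this supremum is positively homogeneous of degree one in the tensor, the identity $\mathcal{A}=\frac{1}{(t-1)!}M$ yields at once $\rho(\mathcal{A}) = \frac{1}{(t-1)!}\,\rho(M)$.

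The second step is to insert the value $\rho(M) = (k-1)^{1/t}\,t!\,(t-1)^{(1-t)/t}$ supplied by Theorem~\ref{thm_Tktradius} and to simplify. Using $t!/(t-1)! = t$ gives $\rho(\mathcal{A}) = t\,(k-1)^{1/t}(t-1)^{(1-t)/t}$, and rewriting $(t-1)^{(1-t)/t} = (t-1)^{1/t}/(t-1)$ turns this into $\frac{t}{t-1}\bigl((t-1)(k-1)\bigr)^{1/t}$, which is the asserted value.

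The remaining, purely technical point I would check is that the two notions of spectral radius really agree in the infinite-dimensional setting: here $\rho$ is defined as a supremum over finite sub-tensors $\mathcal{A}_U$, so I would verify that each $\mathcal{A}_U = \frac{1}{(t-1)!}M_U$, letting the scalar factor pass through the supremum, and that the infinite-dimensional extension of Lemma~\ref{lem-radinonnegsym} legitimately identifies $\rho(\mathcal{A})$ with the variational expression above; I would also confirm that Friedman's $L^t$ convention for $M$ coincides with the supremum of $|M(x,\dots,x)|$ over $\Vert x\Vert_t=1$. I do not expect a genuine obstacle, since all the analytic work --- the exact spectral radius of the hypertree --- is already contained in the cited Theorem~\ref{thm_Tktradius}; the statement follows from it by a single positive rescaling and an elementary simplification, and the only real care lies in matching the normalization factor $\frac{1}{(t-1)!}$ and the $L^t$ conventions across the finite-restriction definition of the infinite-dimensional spectral radius.
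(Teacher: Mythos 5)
Your proposal is correct and takes essentially the same route as the paper, which ``proves'' this theorem only via the remark immediately preceding it: the adjacency tensor differs from Friedman's multilinear form by the scalar factor $\frac{1}{(t-1)!}$, so Theorem~\ref{thm_Tktradius} rescales directly to the stated value. Your write-up simply makes explicit what the paper leaves implicit --- the homogeneity of the variational characterization of $\rho$ and the arithmetic $\frac{1}{(t-1)!}\,(k-1)^{1/t}\,t!\,(t-1)^{(1-t)/t}=\frac{t}{t-1}\bigl((t-1)(k-1)\bigr)^{1/t}$.
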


For a vector $x \in \mathbb{C}^V$ and a subset $U\subseteq V$, we write
$$x^U=\prod_{u\in U}x_u\,.$$

By \cite{CoopDut12}, we have
$$x^T(\mathcal{A}(H)x)=\sum_{e\in E(H)} t\, x^e$$
and
$$(\mathcal{A}(H)x)_v = \sum_{e\in E_v} x^{e\setminus \{v\}}.$$

It is easy to see that the spectral radius of a finite hypergraph
$H$ given by Definition~\ref{defn_radius_Fried} agrees with
the spectral radius of the adjacency tensor of $H$.

Several other attempts have been made to define the eigenvalues of hypergraphs such as a definition by Lu and Peng~\cite{LuPeng_algorithmsmodels_2011,LuPeng_randomstructurealgorithms_2012} and Chung~\cite{chung_dimacs_93} using matrices,  the eigenvalues of the shadow graph~\cite{Mart_procams_01} and so on.

\section{Main results}
\label{sect:main results}

In this section, we first prove that the spectral radius of any
(finite or infinite) regular uniform hypergraph is bounded below
by that of the infinite hypertree. Next, we generalize the
Alon-Boppana Theorem to hypergraphs, obtaining the threshold bound
for the second eigenvalue of regular uniform hypergraphs.

Now we introduce a function $g: \mathbb{N}\rightarrow\mathbb{R}$ by setting
\[
g(n)=\frac{1+((t(1-\frac{1}{k}))^{1/(t-1)}-1)n}{((t-1)(k-1))^{n/t}},
\]
where $t\geq2$, $k\geq2$ are fixed integers. Note that $g(0)=1$. For convenience, let
$$\hat{g}(n) = 1+((t(1-\frac{1}{k}))^{1/(t-1)}-1)n = g(n)((t-1)(k-1))^{n/t}.$$
First, we show that $g$ is a non-increasing function.

\begin{lemma}\label{lem_decrease}
Let $t\geq2$ and $k\geq2$ be integers. Then for every $n\ge0$, $g(n+1)\le g(n)$.
\end{lemma}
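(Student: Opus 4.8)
The plan is to clear the positive denominator and reduce the claimed monotonicity to a single numerical inequality in $t$ and $k$, which I will then settle by a one-variable minimization. To set up, I would abbreviate the two ingredients of $g$: write $\alpha = (t(1-\tfrac1k))^{1/(t-1)} - 1$ for the slope of the affine numerator $\hat g(n) = 1 + \alpha n$, and $\beta = ((t-1)(k-1))^{1/t}$ for the base of the denominator, so that $g(n) = (1+\alpha n)/\beta^n$. Since $\beta > 0$, the inequality $g(n+1) \le g(n)$ is equivalent, after multiplying through by $\beta^{n+1}$, to
\[
1 + \alpha(n+1) \le \beta(1 + \alpha n),
\]
which rearranges to
\[
\alpha \le (\beta - 1)(1 + \alpha n).
\]

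Next I would dispose of the easy dependence on $n$. I would check that $\alpha \ge 0$ and $\beta \ge 1$: the bound $\alpha \ge 0$ amounts to $t(k-1) \ge k$, i.e.\ $k(t-1) \ge t$, which holds because $k \ge 2 \ge t/(t-1)$ for $t \ge 2$, while $\beta \ge 1$ is immediate from $(t-1)(k-1) \ge 1$. Consequently the right-hand side $(\beta-1)(1+\alpha n)$ is a non-decreasing function of $n$ (a non-negative constant times a non-decreasing factor), so it suffices to verify the inequality at $n = 0$, namely $\alpha \le \beta - 1$, or equivalently $\alpha + 1 \le \beta$.

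The crux is this last inequality $\alpha + 1 \le \beta$, that is, $(t(k-1)/k)^{1/(t-1)} \le ((t-1)(k-1))^{1/t}$. Raising both sides to the positive power $t(t-1)$ and cancelling a common factor of $(k-1)^{t-1}$, it becomes the clean statement
\[
\frac{t^t}{(t-1)^{t-1}} \le \frac{k^t}{k-1}.
\]
I would prove this by viewing the right-hand side as a function $\phi(k) = k^t/(k-1)$ of a real variable $k > 1$ and minimizing it: its derivative has numerator proportional to $(t-1)k - t$, so $\phi$ attains its unique minimum at $k = t/(t-1)$, where a direct substitution yields $\phi(t/(t-1)) = t^t/(t-1)^{t-1}$. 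Hence $\phi(k) \ge t^t/(t-1)^{t-1}$ for every real $k > 1$, and in particular for all integers $k \ge 2$, which is exactly what is needed.

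I expect the main obstacle to be locating this reduction, namely recognizing that the whole statement collapses to the case $n = 0$ and then to the sharp bound $\min_{k>1} k^t/(k-1) = t^t/(t-1)^{t-1}$. Once the minimization is set up, the computation is routine and the critical value matches the target exactly; everything upstream is just algebraic bookkeeping and sign checks.
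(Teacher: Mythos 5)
Your proof is correct, and it takes a genuinely cleaner route than the paper's, even though both arguments ultimately rest on the same core inequality $\alpha+1\le\beta$, i.e.\ $(t(1-\tfrac1k))^{1/(t-1)}\le((t-1)(k-1))^{1/t}$, equivalently $t^t/(t-1)^{t-1}\le k^t/(k-1)$. You clear denominators and note that the right-hand side $(\beta-1)(1+\alpha n)$ is non-decreasing in $n$ (using $\alpha\ge0$, $\beta\ge1$), so the whole lemma collapses to the single case $n=0$, which you then settle by minimizing $\phi(k)=k^t/(k-1)$ over real $k>1$, locating the minimum $t^t/(t-1)^{t-1}$ at $k=t/(t-1)$. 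The paper instead works with the ratio $g(n+1)/g(n)$: it dismisses $n=0$ as ``clear,'' reduces the cases $n\ge1$ to the worst case $n=1$ (the sum inequality $\beta+(1+\alpha)^{-1}\ge2$), then uses AM--GM to reduce that to the product inequality $\beta\,(1+\alpha)^{-1}\ge1$ --- which is precisely your $n=0$ inequality --- and finally proves this product inequality by a second AM--GM, writing $\tfrac{m+1}{s+1}$ (with $s=t-1$, $m=k-1$) as an average of $s$ copies of $m/s$ and one $1$. Your approach buys several things: it needs no case split at all (even $t=k=2$ is covered, since then $\alpha=0$ and $\beta=1$); it identifies $n=0$ as the extremal case; and it explicitly proves that case, which the paper's displayed chain of reductions does not literally cover --- the $n=0$ inequality $\alpha+1\le\beta$ is strictly stronger than the $n=1$ inequality the paper verifies, and in the paper it is only established implicitly, as the intermediate product inequality. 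What the paper's route buys in exchange is that it is purely algebraic, using only AM--GM and no differentiation, whereas your argument invokes elementary calculus; both are complete and rigorous.
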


\begin{proof}
If $t=2$ and $k=2$, then $g(n)=1$ for every $n$. The inequality is also clear for $n=0$. Thus, we may assume that $n\ge1$ and that $t>2$ or $k>2$. Since
\begin{align*}
    \frac{g(n+1)}{g(n)}=((t-1)(k-1))^{-1/t}\left(1+\frac{(t(1-\frac{1}{k}))^{1/(t-1)}-1}{1+((t(1-\frac{1}{k}))^{1/(t-1)}-1)n}\right),
\end{align*}
it suffices to prove that
$$
   1+\frac{(t(1-\frac{1}{k}))^{1/(t-1)}-1}{1+((t(1-\frac{1}{k}))^{1/(t-1)}-1)n}
   \leq((t-1)(k-1))^{1/t}
$$
for every $n\ge1$. The right-hand side is independent of $n$ and the left-hand side is largest when $n=1$. Thus, it suffices to prove that
$$
  1+\frac{(t(1-\frac{1}{k}))^{1/(t-1)}-1}{(t(1-\frac{1}{k}))^{1/(t-1)}}
  \leq ((t-1)(k-1))^{1/t}.
$$
This is equivalent to the inequality
$((t-1)(k-1))^{1/t}+(t(1-\frac{1}{k}))^{1/(1-t)}\geq2$,
which we prove in the remainder of the proof.
Equivalently, we shall prove that
$(sm)^{1/(s+1)}+(\frac{m+1}{m(s+1)})^{1/s}\geq2$, for $s\geq1$ and
$m\geq1$. By AM-GM inequality, we only need to show that
$(sm)^{1/(s+1)}(\frac{m+1}{m(s+1)})^{1/s}\geq1$. Since
\begin{align*}
   (sm)^{s}\left(\frac{m+1}{m(s+1)}\right)^{s+1}&= \frac{s^s}{m}\left(\frac{m+1}{s+1}\right)^{s+1}\\
&=\frac{s^s}{m}\left(\frac{\underbrace{\frac{m}{s}+\cdots+\frac{m}{s}}_s+1}{s+1}\right)^{s+1}\\[1mm]
&\geq\frac{s^s}{m}\left(\sqrt[s+1]{\frac{m}{s}\cdot\cdots\cdot\frac{m}{s}\cdot1}\right)^{s+1}\\
&=\frac{s^s}{m}\left(\frac{m}{s}\right)^{s}\\
&= m^{s-1} \ge 1,
\end{align*}
where the first inequality follows from the AM-GM inequality
$\frac{a_1+\cdots+a_t}{t}\geq\sqrt[t]{a_1\cdots a_t}$ and we are
done.
\end{proof}

Now let $o$ be a reference vertex in a hypergraph $H$ with vertex-set $V$, and define $x \in \mathbb{R}^V$ by
setting $x_v = g(\dist(o,v))$. Using the vector $x$, we get an
analog of nonnegative matrix inequality $Ax\geq \alpha x$, which
enables  us to compare the spectral radii of hypergraphs.

\begin{lemma}\label{lem_vectororder}
Let $H$ be a connected $k$-regular and $t$-uniform hypergraph with
adjacency tensor $\mathcal A$, and $x \in \mathbb{R}^V$ be defined as above.
Then $\mathcal{A} x \ge \varrho\,x^{[t-1]}$, where
$\varrho = \frac{t}{t-1}((t-1)(k-1))^{1/t}$.
\end{lemma}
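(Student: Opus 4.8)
The plan is to verify the asserted tensor inequality coordinate by coordinate. Using the identity $(\mathcal{A}x)_v=\sum_{e\in E_v}x^{e\setminus\{v\}}$ recorded above, and writing $d=\dist(o,v)$ so that $x_v=g(d)$, the claim $\mathcal{A}x\ge\varrho\,x^{[t-1]}$ amounts to showing, for every vertex $v$, that
\[
   \sum_{e\in E_v}x^{e\setminus\{v\}}\;\ge\;\varrho\,g(d)^{t-1}.
\]
Since $g$ is positive and non-increasing (Lemma~\ref{lem_decrease}), each factor $g(\dist(o,u))$ occurring in a product $x^{e\setminus\{v\}}$ can be bounded from below once the distance from $o$ to the vertices $u\in e$ is controlled. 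The metric fact I would use is that if $u$ and $v$ lie in a common edge then $|\dist(o,u)-\dist(o,v)|\le1$, so every vertex appearing in an edge through $v$ lies at distance $d-1$, $d$, or $d+1$ from $o$.

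First I would treat $v=o$ (i.e.\ $d=0$): each of the $k$ edges at $o$ has its remaining $t-1$ vertices at distance exactly $1$, so $(\mathcal{A}x)_o\ge k\,g(1)^{t-1}$, and a direct computation gives $k\,g(1)^{t-1}=\varrho=\varrho\,g(0)^{t-1}$ (this equality is how the slope of $\hat g$ is calibrated). For $d\ge1$ I would single out the last edge $e_0$ of a shortest walk from $o$ to $v$: it contains a vertex $w$ with $\dist(o,w)=d-1$, and because every other vertex of $e_0$ shares $e_0$ with $w$, all of them lie at distance $\le d$ from $o$; hence $x^{e_0\setminus\{v\}}\ge g(d-1)\,g(d)^{t-2}$. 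Each of the remaining $k-1$ edges through $v$ has all its other vertices at distance $\le d+1$, contributing at least $g(d+1)^{t-1}$. Summing over the $k$ edges at $v$ yields
\[
   (\mathcal{A}x)_v\;\ge\;g(d-1)\,g(d)^{t-2}+(k-1)\,g(d+1)^{t-1}.
\]

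It then remains to prove the scalar inequality $g(d-1)g(d)^{t-2}+(k-1)g(d+1)^{t-1}\ge\varrho\,g(d)^{t-1}$ for all $d\ge1$. Here I would substitute $g(n)=\hat g(n)\,((t-1)(k-1))^{-n/t}$ and use $(k-1)\big((t-1)(k-1)\big)^{-1}=\tfrac1{t-1}$ together with $\varrho=\tfrac{t}{t-1}((t-1)(k-1))^{1/t}$; after clearing the common power of $((t-1)(k-1))^{1/t}$, the inequality collapses to
\[
   (t-1)\,\hat g(d-1)\,\hat g(d)^{t-2}+\hat g(d+1)^{t-1}\;\ge\;t\,\hat g(d)^{t-1}.
\]
Since $\hat g$ is affine in $n$ with nonnegative slope $c=(t(1-\tfrac1k))^{1/(t-1)}-1$ (nonnegative because $t(1-\tfrac1k)\ge1$ for $t,k\ge2$), setting $b=\hat g(d)$ turns this into $(b+c)^{t-1}\ge b^{t-1}+(t-1)b^{t-2}c$, which is exactly Bernoulli's inequality, i.e.\ convexity of $y\mapsto y^{t-1}$ at $b>0$ with $c\ge0$.

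The hard part will be the distance bookkeeping of the middle step, not the final algebra. The essential and slightly delicate point is that the edge $e_0$ pointing back toward $o$ does \emph{not} force any factor $g(d+1)$: because $e_0$ meets the distance-$(d-1)$ vertex $w$, every vertex of $e_0$ stays at distance $\le d$, which is precisely what makes the per-vertex bound strong enough to reduce to the clean convexity inequality above. I would also note that connectivity guarantees $x$ is well defined (all distances finite) and that no linearity of $H$ is needed: the argument uses only $k$-regularity, $t$-uniformity, and the triangle-type inequality for hypergraph distance.
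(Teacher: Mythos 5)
Your proposal is correct and follows essentially the same route as the paper's proof: the same coordinate-wise verification with the base case at $o$, the same decomposition into the one edge meeting a vertex at distance $d-1$ (bounded by $g(d-1)g(d)^{t-2}$) plus $k-1$ edges bounded by $g(d+1)^{t-1}$, and the same reduction via $\hat g$ to the Bernoulli/convexity inequality $(b+c)^{t-1}\ge b^{t-1}+(t-1)b^{t-2}c$. Your explicit distance bookkeeping (all vertices of the backward edge lie at distance $\le d$) is exactly the fact the paper uses implicitly when invoking monotonicity of $g$.
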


\begin{proof}
We have $(\mathcal A x)_o = kg(1)^{t-1} = k\left(\frac{(t(1-\frac{1}{k}))^{1/(t-1)}}{((t-1)(k-1))^{1/t}}\right)^{t-1} = \frac{t}{t-1}((t-1)(k-1))^{1/t} = \varrho\,x_o \allowbreak = \allowbreak \varrho\,x_o^{t-1}$. Let $v\in V(H)$ with $\dist(o,v)=n\geq1$. Then $v$ is
adjacent to at least one vertex at distance $n-1$ from $o$.
Let $f$ be the corresponding edge. Since $g$ is non-increasing, we have:
\begin{align*}
 (\mathcal A x)_v &= \sum_{e\in E_v}x^{e\setminus\{v\}} =
  x^{f\setminus\{v\}} + \sum_{e\in E_v\setminus \{f\}}x^{e\setminus\{v\}}\\
 &\geq g(n-1)g(n)^{t-2}+(k-1)g(n+1)^{t-1}\\
 &=\frac{1+((t(1-\frac{1}{k}))^{1/(t-1)}-1)(n-1)}{((t-1)(k-1))^{\frac{n-1}{t}}}\left( \frac{1+((t(1-\frac{1}{k}))^{1/(t-1)}-1)n}{((t-1)(k-1))^{\frac{n}{t}}}\right)^{t-2}\\
   &\qquad+(k-1)\left(\frac{1+((t(1-\frac{1}{k}))^{1/(t-1)}-1)(n+1)}{((t-1)(k-1))^{\frac{n+1}{t}}}\right)^{t-1}\\
   &=\frac{t}{t-1}\,\frac{1}{((t-1)(k-1))^{\frac{n(t-1)-1}{t}}}\, \Bigl((1-\tfrac{1}{t})\hat{g}(n-1)\hat{g}(n)^{t-2}+\tfrac{1}{t}\hat{g}(n+1)^{t-1}\Bigr).
\end{align*}
Since
\begin{align*}
 \hat{g}(n+1)^{t-1} &= \left[1+((t(1-\tfrac{1}{k}))^{1/(t-1)}-1)(n+1)\right]^{t-1}\\
 &=\left[\hat{g}(n)+(t(1-\tfrac{1}{k}))^{1/(t-1)}-1)\right]^{t-1}\\
 &\geq \hat{g}(n)^{t-1}+(t-1)\hat{g}(n)^{t-2}\left[(t(1-\tfrac{1}{k}))^{1/(t-1)}-1\right],
\end{align*}
the above inequality continues as
\begin{align*}
 &\frac{t}{t-1}\,\frac{1}{((t-1)(k-1))^{\frac{n(t-1)-1}{t}}}\, \Bigl[(1-\tfrac{1}{t})\hat{g}(n-1)\hat{g}(n)^{t-2}+\tfrac{1}{t}\hat{g}(n+1)^{t-1}\Bigr]\\
 &\geq \frac{t}{t-1}\,\frac{1}{((t-1)(k-1))^{\frac{n(t-1)-1}{t}}}\, \Bigl[(1-\tfrac{1}{t})\hat{g}(n-1)\hat{g}(n)^{t-2}\\
 &+\tfrac{1}{t}(\hat{g}(n)^{t-1}+(t-1)\hat{g}(n)^{t-2}(t(1-\tfrac{1}{k}))^{1/(t-1)}-1))\Bigr].
\end{align*}
Since
\begin{align*}
   &(1-\tfrac{1}{t})\hat{g}(n-1)\hat{g}(n)^{t-2} + \tfrac{1}{t}\left[\hat{g}(n)^{t-1}+(t-1)\hat{g}(n)^{t-2}(t(1-\tfrac{1}{k}))^{1/(t-1)}-1)\right]\\
   &=\tfrac{1}{t}\hat{g}(n)^{t-1}+(1-\tfrac{1}{t})\hat{g}(n)^{t-2}\left[\hat{g}(n-1)+(t(1-\tfrac{1}{k}))^{1/(t-1)}-1)\right]\\
   &=\tfrac{1}{t}\hat{g}(n)^{t-1}+(1-\tfrac{1}{t})\hat{g}(n)^{t-2}\hat{g}(n)\\
   &=\hat{g}(n)^{t-1},
\end{align*}
the original inequality continues as
\begin{align*}
 &\frac{t}{t-1}\,\frac{1}{((t-1)(k-1))^{\frac{n(t-1)-1}{t}}}\,\hat{g}(n)^{t-1}\\
&=\frac{t}{t-1}((t-1)(k-1))^{1/t}\frac{\hat{g}(n)^{t-1}}{((t-1)(k-1))^{\frac{n(t-1)}{t}}}\\
 &=\frac{t}{t-1}((t-1)(k-1))^{1/t}g(n)^{t-1}.
\end{align*}
Therefore we have $(\mathcal A x)_v\geq
\frac{t}{t-1}((t-1)(k-1))^{1/t}g(n)^{t-1}=\varrho\, x_v^{t-1}$.
\end{proof}

By Lemma~\ref{lem_vectororder}, we have $\mathcal A x\geq\varrho
x^{[t-1]}$ and so $x^T(\mathcal Ax)\geq\varrho x^T x^{[t-1]} =
\varrho\|x\|_t^t$ if the hypergraph is finite, thus $\rho(\mathcal
A)\geq\frac{x^T(\mathcal A x)}{\|x\|_t^t}=\varrho$ in this case.
But, when the hypergraph is infinite, it is not immediately clear that
$\|x\|_t < \infty$. Fortunately,
we can say so whenever the hypergraph is finite or infinite by what
we get in the following.

\begin{theorem}\label{thm_radius}
Let $H$ be a $k$-regular, $t$-uniform hypergraph (finite or infinite). Then
$$\rho(H)\geq\frac{t}{t-1}((t-1)(k-1))^{1/t}.$$
\end{theorem}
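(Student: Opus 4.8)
The plan is to treat the finite and infinite cases through a single truncation argument based on the ball-exhaustion definition of $\rho(\mathcal A)$ for infinite tensors. First I would reduce to the connected case: if $H$ is disconnected, pick $o$ in a component $C$; since $C\subseteq V$ we have $\rho(H)=\rho(\mathcal A)\ge\rho(\mathcal A_C)$, and $C$ is again $k$-regular and $t$-uniform, so it suffices to bound $\rho(\mathcal A_C)$. When $H$ is finite, the computation preceding the statement already finishes the job: Lemma~\ref{lem_vectororder} gives $\mathcal A x\ge\varrho\,x^{[t-1]}$, whence $x^T(\mathcal A x)\ge\varrho\|x\|_t^t$, and normalizing $x$ in Lemma~\ref{lem-radinonnegsym} yields $\rho(H)\ge\varrho$. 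So the substance is the infinite case.

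For infinite $H$ I would use $\rho(\mathcal A)=\sup_{U}\rho(\mathcal A_U)$ over finite $U\subseteq V$, taking $U=B_N$, the ball of radius $N$ about $o$ (finite by local finiteness). Writing $y=x|_{B_N}$ and applying Lemma~\ref{lem-radinonnegsym} to the finite symmetric nonnegative tensor $\mathcal A_{B_N}$ gives $\rho(\mathcal A_{B_N})\ge y^T(\mathcal A_{B_N}y)/\|y\|_t^t$. The key observation is that every edge through a vertex $v$ with $\dist(o,v)\le N-1$ stays inside $B_N$, so for such $v$ one has $(\mathcal A_{B_N}y)_v=(\mathcal A x)_v\ge\varrho\,x_v^{t-1}$ by Lemma~\ref{lem_vectororder}; discarding the nonnegative contributions of the sphere $\dist(o,v)=N$ then yields
\[
   \rho(\mathcal A_{B_N})\ \ge\ \frac{y^T(\mathcal A_{B_N}y)}{\|y\|_t^t}\ \ge\ \varrho\,\frac{\Sigma_{N-1}}{\Sigma_N},
   \qquad \Sigma_N:=\sum_{v\in B_N}x_v^t .
\]

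The main obstacle is exactly the one flagged before the statement: $\Sigma_N$ may diverge, so it is not clear that $\Sigma_{N-1}/\Sigma_N\to1$. I would overcome this by showing that $\Sigma_N$ grows only polynomially. Letting $s_d$ be the number of vertices at distance $d$ from $o$, a counting argument bounds $s_d$ by the corresponding count $N_d=k(t-1)((k-1)(t-1))^{d-1}$ in the hypertree $T_{t,k}$: for $d\ge2$ every vertex at distance $d-1$ lies on a parent edge reaching distance $d-2$, hence has at most $k-1$ further edges, each contributing at most $t-1$ vertices at distance $d$, giving $s_d\le(k-1)(t-1)s_{d-1}$ (with $s_1\le k(t-1)$). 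Since $x_v=g(\dist(o,v))$ and a direct computation gives $N_d\,g(d)^t=\tfrac{k}{k-1}\hat g(d)^t=O(d^t)$, each sphere contributes $\sum_{\dist(o,v)=d}x_v^t=s_d\,g(d)^t=O(d^t)$, so $\Sigma_N=O(N^{t+1})$.

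Finally, a nondecreasing sequence $\Sigma_N$ with $\Sigma_0=1$ and polynomial growth cannot have $\Sigma_{N-1}/\Sigma_N$ bounded away from $1$ for all large $N$ (that would force geometric growth), so $\limsup_N\Sigma_{N-1}/\Sigma_N=1$. Combined with the displayed bound and $\Sigma_{N-1}/\Sigma_N\le1$ this gives $\rho(\mathcal A)\ge\sup_N\varrho\,\Sigma_{N-1}/\Sigma_N=\varrho$, completing the infinite case and hence the theorem. I expect the polynomial-growth estimate on $\Sigma_N$ — in particular the sphere bound $s_d\le N_d$ — to be the crux, with everything else being a repackaging of Lemmas~\ref{lem_vectororder} and~\ref{lem-radinonnegsym}.
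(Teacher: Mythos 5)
Your proposal is correct, and its skeleton matches the paper's: both use the truncated radial vector $x_v=g(\dist(o,v))$, the pointwise inequality of Lemma~\ref{lem_vectororder}, and---crucially---the same sphere-growth estimate (your $s_d\le(t-1)(k-1)s_{d-1}$ is exactly what the paper iterates as $|S_n|\le|S_i|((t-1)(k-1))^{n-i}$ in (\ref{e_sum})) combined with the fact that $\hat g(d)$ grows only linearly, so sphere contributions are $O(d^t)$. Where you genuinely differ is the bookkeeping of the boundary error. The paper keeps the Rayleigh quotient of the truncated vector against the full tensor $\mathcal A$, explicitly subtracts the contribution of the edges running between $S_n$ and $S_{n+1}$ (bounded by $t(k-1)|S_n|g(n)^t$), and shows this \emph{additive} error, divided by $\|x_n\|_t^t$, tends to $0$ via the Stolz--Ces\`aro theorem. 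You instead restrict the tensor to the ball $B_N$, keep only the contributions of vertices in $B_{N-1}$ (where $(\mathcal A_{B_N}y)_v=(\mathcal A x)_v\ge\varrho\,x_v^{t-1}$ holds exactly, since every edge through a vertex of $B_{N-1}$ stays inside $B_N$), and discard the nonnegative outer-sphere terms, obtaining the \emph{multiplicative} bound $\rho(\mathcal A_{B_N})\ge\varrho\,\Sigma_{N-1}/\Sigma_N$; you finish with a polynomial-versus-geometric growth dichotomy, which only requires $\limsup_N\Sigma_{N-1}/\Sigma_N=1$, i.e.\ a good subsequence. Your version is arguably cleaner: it avoids classifying the boundary edges $E[S_n,S_{n+1}]$ and any Ces\`aro-type limit argument. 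What the paper's formulation buys in exchange is the explicit additive inequality (\ref{eq:star1}), which is recycled verbatim in the proof of Theorem~\ref{thm_AlonBoppana hypergraphs}, where the test vector is a signed combination of radial vectors centered at several vertices and the per-center error terms are summed; if you wanted your variant to extend to that proof, you would need to redo the multiplicative accounting for each center separately, which works but is less direct there.
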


\begin{proof}
We may assume $H$ is connected, and further it suffices to
consider the infinite case by the arguments given earlier. In the
hypergraph $H$, let $B_n$ denote the set of vertices at distance
at most $n$ from the reference vertex $o$, and $S_n=B_n\setminus
B_{n-1}$. Let $x\in \mathbb{R}^V$ be the function used in Lemma
\ref{lem_vectororder}, i.e., $x_v=g(\dist(v,o))$ for $v\in V$.
Define a function $x_n: V(H)\rightarrow \mathbb{R}$ by
$x_n(v)=x_v$ if $v\in B_n$ and $x_n(v)=0$ otherwise. Then for any
edge $e\in E(H)$, we have $x_n^e=x^e$ if $e\subseteq B_n$, and
$x_n^e=0$ otherwise. Thus, $x_n^{T}(\mathcal A x_n) = t\sum_{e\in
E(H)} x_n^e = t\sum_{e\in E(B_n)}x^e$. Denote by $E[S_n, S_{n+1}]$
the set of edges whose vertices have a  partition such that one
part is in  $S_n$ and the other in $S_{n+1}$. By using Lemma
\ref{lem_vectororder}, we obtain
\begin{align}
 x_n^{T}(\mathcal A x_n) &= t\sum_{e\cap B_n\neq\emptyset}x^e - ~t\sum_{e\in E[S_n, S_{n+1}]}x^{e} \nonumber\\
 &=\sum_{v\in B_n}x_v\sum_{e\in E_v}x^{e\setminus\{v\}} - ~t\sum_{e\in E[S_n, S_{n+1}]}x^{e} \nonumber\\
 &\geq \sum_{v\in B_n}x_v(\varrho x_v^{t-1}) - ~t\sum_{e\in E[S_n, S_{n+1}]}x^{e} \nonumber\\
 &= \varrho\|x_n\|_t^t - ~t\sum_{e\in E[S_n, S_{n+1}]}x^{e} \nonumber\\
 &\geq \varrho\|x_n\|_t^t-t(k-1)|S_n|\,g(n)^{t}.\label{eq:star1}
\end{align}
Note that $\|x_n\|_t^t=\sum_{i=0}^n|S_i|g(i)^t$. Dividing
by $\|x_n\|_t^t$ on both sides of the above inequality, we
have
\begin{equation}
 \rho(H) \geq \frac{x_n^{T}(\mathcal A x_n)}{\|x_n\|_t^t}
 \geq \varrho-t(k-1)\frac{|S_n|g(n)^{t}}{\sum_{i=0}^n|S_i|g(i)^t}.\label{eq:5a}
\end{equation}
For each $i$, $1\le i\leq n$, we have
\begin{equation}\label{e_sum}
|S_n|g(n)^{t}\leq
|S_i|((t-1)(k-1))^{n-i}g(n)^{t}=|S_i|g(i)^{t}\left(\frac{1+((t(1-\frac{1}{k}))^{1/(t-1)}-1)n}{1+((t(1-\frac{1}{k}))^{1/(t-1)}-1)i}\right)^{t}.
\end{equation}
Thus we have
\begin{align*}
    \frac{|S_n|g(n)^{t}}{\sum_{i=0}^n|S_i|g(i)^t}&=\frac{|S_n|g(n)^{t}}{1+\sum_{i=1}^n|S_i|g(i)^t}\\
&\leq  \frac{|S_n|g(n)^{t}}{\sum_{i=1}^n|S_i|g(i)^t}\\
     &\leq\frac{(1+((t(1-\frac{1}{k}))^{1/(t-1)}-1)n)^t}{\sum_{i=1}^n(1+((t(1-\frac{1}{k}))^{1/(t-1)}-1)i)^t}.
\end{align*}
Construct two sequences $\{a_n\}$ and $\{b_n\}$ with
$a_n=(1+((t(1-\frac{1}{k}))^{1/(t-1)}-1)n)^t$ and
$b_n=\sum_{i=1}^na_i$. It is easy to see that the sequence
$\{b_n\}$ is strictly increasing and approaches $+\infty$. Since
\begin{align*}
\frac{a_n-a_{n-1}}{b_n-b_{n-1}}=\frac{a_n-a_{n-1}}{a_n}=1-\frac{a_{n-1}}{a_n}
\end{align*}
and
\begin{align*}
\lim_{n\rightarrow \infty}\frac{a_{n-1}}{a_n}=\lim_{n\rightarrow
\infty}\left(\frac{1+((t(1-\frac{1}{k}))^{1/(t-1)}-1)(n-1)}{1+((t(1-\frac{1}{k}))^{1/(t-1)}-1)n}\right)^t=1,
\end{align*}
by Stolz-Ces\`aro theorem, we have
\begin{align*}
  \lim_{n\rightarrow \infty}\frac{a_n}{b_n}=\lim_{n\rightarrow \infty}\frac{a_n-a_{n-1}}{b_n-b_{n-1}}=1-\lim_{n\rightarrow
  \infty}\frac{a_{n-1}}{a_n}=1-1=0.
\end{align*}
This implies that the last term in (\ref{eq:5a}) converges to 0 as $n\to\infty$, and thus we conclude that $\rho(H)\ge\varrho$.
\end{proof}

We have the following corollary.

\begin{coro} \label{cor:acyclic}
Let $H$ be a (finite or infinite) acyclic hypergraph with maximum degree at most\/ $k$. Then
$$
   \rho(H) \le \frac{t}{t-1}((t-1)(k-1))^{1/t}.
$$
\end{coro}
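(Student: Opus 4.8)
The plan is to control $\rho(H)$ by the spectral radius of the regular hypertree $T_{t,k}$ and then invoke its known value. First I would reduce to the connected case. Using the variational characterization of Lemma~\ref{lem-radinonnegsym}, note that every edge of $H$ lies within a single connected component, so for $x\ge 0$ we have $x^T(\mathcal A x)=\sum_{e}t\,x^e=\sum_{C}x_C^T(\mathcal A_C x_C)$, where $C$ ranges over the components, $\mathcal A_C$ is the adjacency tensor of $C$, and $x_C$ is the restriction of $x$ to $V(C)$. By homogeneity, $x_C^T(\mathcal A_C x_C)\le \rho(C)\|x_C\|_t^t$, and since $\|x\|_t^t=\sum_C\|x_C\|_t^t$, taking $\|x\|_t=1$ gives $x^T(\mathcal A x)\le \sup_C\rho(C)$. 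Hence $\rho(H)=\sup_C\rho(C)$, and it suffices to bound $\rho(C)$ for each component $C$, which is a connected acyclic $t$-uniform hypergraph with maximum degree at most $k$.

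Next I would embed each such $C$ into $T_{t,k}$ as a subhypergraph, using the Levi graph. Here $L(C)$ is a tree whose edge-side nodes all have degree $t$ and whose vertex-side nodes have degree at most $k$, while $L(T_{t,k})$ is the $(t,k)$-biregular infinite tree. I would construct an injective, bipartition-respecting embedding $\iota\colon L(C)\hookrightarrow L(T_{t,k})$ by rooting $L(C)$ at an arbitrary node, processing nodes in order of distance from the root, and greedily sending each new child to an unused neighbor of the image of its parent. The degree bounds ($\le k$ on the vertex side, $=t$ on the edge side) guarantee that enough free slots always remain, and since $L(T_{t,k})$ is a tree these images never collide with previously used nodes; the construction runs verbatim for a locally finite infinite tree by taking the union over the finite BFS stages. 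Because every edge-node of $L(T_{t,k})$ has exactly $t$ neighbors, $\iota$ carries each hyperedge $e$ of $C$ to the genuine hyperedge $\{\iota(v):v\in e\}$ of $T_{t,k}$, exhibiting $C$ as a subhypergraph.

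With the embedding in hand, monotonicity under adding edges follows from nonnegativity of the adjacency tensor: for any $x\ge 0$ supported on $V(C)$ with $\|x\|_t=1$, extending $x$ by zero to $V(T_{t,k})$ gives $x^T(\mathcal A(C)x)=\sum_{e\in E(C)}t\,x^e\le\sum_{e\in E(T_{t,k})}t\,x^e=x^T(\mathcal A(T_{t,k})x)\le\rho(T_{t,k})$, so Lemma~\ref{lem-radinonnegsym} yields $\rho(C)\le\rho(T_{t,k})$. Finally, Theorem~\ref{thm_Tktradius as tensor} gives $\rho(T_{t,k})=\frac{t}{t-1}((t-1)(k-1))^{1/t}$; its lower bound $\rho(T_{t,k})\ge\frac{t}{t-1}((t-1)(k-1))^{1/t}$ is exactly Theorem~\ref{thm_radius} applied to the $k$-regular acyclic $T_{t,k}$, and the matching upper bound needed here is Friedman's value, so the two together pin it down. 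Combining the displays gives $\rho(H)=\sup_C\rho(C)\le\frac{t}{t-1}((t-1)(k-1))^{1/t}$.

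The main obstacle is making the embedding step fully rigorous: one must verify that the greedy Levi-graph construction is injective on both sides (so distinct edges map to distinct edges and shared vertices are handled consistently) and that it extends cleanly to infinite, locally finite components. A secondary point deserving care is the infinite-dimensional monotonicity — one should either apply the infinite form of the variational formula in Lemma~\ref{lem-radinonnegsym} directly, as above, or argue through the finite restrictions $\mathcal A_U$ and the sup-over-finite-subsets definition of $\rho$ for infinite tensors, checking that the entrywise domination $\mathcal A(C)\le\mathcal A(T_{t,k})$ passes to spectral radii at each finite stage.
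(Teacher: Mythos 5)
Your proof is correct and follows essentially the same route as the paper's one-line argument: exhibit $H$ (component by component) as a subhypergraph of $T_{t,k}$, use monotonicity of the spectral radius under subhypergraph containment for nonnegative tensors, and conclude with Friedman's value $\rho(T_{t,k})=\frac{t}{t-1}((t-1)(k-1))^{1/t}$ from Theorem~\ref{thm_Tktradius as tensor}. The details you supply --- the reduction to connected components (which is genuinely needed, since a disconnected acyclic hypergraph, e.g.\ two disjoint copies of $T_{t,k}$, need not embed into $T_{t,k}$ itself), the greedy Levi-graph embedding, and the zero-extension monotonicity argument --- are precisely the steps the paper compresses into ``Observe that $H$ is linear and thus a subhypergraph of the hypertree $T_{t,k}$.''
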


\begin{proof}
Observe that $H$ is linear and thus a subhypergraph of the hypertree $T_{t,k}$. Therefore,
$$
   \rho(H) \le \rho(T_{t,k}) = \frac{t}{t-1}((t-1)(k-1))^{1/t}.
$$
\end{proof}

We are ready for the extension of the Alon-Boppana Theorem to hypergraphs. Of course, speaking of $\lambda_2(H)$ makes sense only in the case of finite hypergraphs.

\begin{theorem}
\label{thm_AlonBoppana hypergraphs}
For every finite $k$-regular, $t$-uniform hypergraph $H$ on $n$ vertices, we have
$$\lambda_2(H)\geq\frac{t}{t-1}((t-1)(k-1))^{1/t}-o_n(1).$$
\end{theorem}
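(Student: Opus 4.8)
The plan is to lower-bound $\lambda_2(H)=\bigl\|\tau_H-\tfrac{t|E|}{n^t}J\bigr\|$ by exhibiting a \emph{single} nonnegative unit test vector, reusing almost verbatim the truncated ball estimates already established for Theorem~\ref{thm_radius}. Since $\tau_H-\tfrac{t|E|}{n^t}J$ is symmetric, its spectral norm equals $\sup_{\|x\|_t=1}\bigl|(\tau_H-\tfrac{t|E|}{n^t}J)(x,\dots,x)\bigr|$, and for any $x$ one has $(\tau_H-\tfrac{t|E|}{n^t}J)(x,\dots,x)=x^T(\mathcal Ax)-\tfrac{t|E|}{n^t}\bigl(\sum_v x_v\bigr)^t=t\sum_{e\in E}x^e-\tfrac{k}{n^{t-1}}\bigl(\sum_v x_v\bigr)^t$, using $t|E|=nk$ for a $k$-regular hypergraph. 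Thus it suffices to produce $x$ whose adjacency term is at least $(\varrho-o_n(1))\|x\|_t^t$ while its $J$-term is $o_n(1)\,\|x\|_t^t$; the only content beyond Theorem~\ref{thm_radius} lies in controlling this second term. This is in effect the Alon--Boppana argument, simplified: the $cJ$ that is subtracted already plays the role of projecting away the Perron direction, so no orthogonalization via two far-apart balls is needed.

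I would choose a radius $r=r(n)$ with $r\to\infty$ and $|B_r|=o(n)$, and set $x_v=g(\dist(o,v))$ for $v\in B_r$ and $x_v=0$ otherwise, exactly the truncated vector of Theorem~\ref{thm_radius}. Such an $r$ and reference vertex $o$ exist for large $n$: a connected $k$-regular $t$-uniform hypergraph satisfies $|B_j|\le C\,((t-1)(k-1))^{j}$ (the shell-growth bound $|S_{j+1}|\le (t-1)(k-1)|S_j|$ that underlies \eqref{e_sum}), so its diameter tends to infinity with $n$; taking $o$ an endpoint of a diametral pair and, say, $r=\lfloor\tfrac12\log_{(t-1)(k-1)}n\rfloor$ makes every shell $S_0,\dots,S_r$ nonempty while $|B_r|=O(\sqrt n)=o(n)$. (If $H$ is disconnected, either a component has order tending to infinity, on which the same argument runs with the global $n$ in the $J$-term, or all components are small, in which case a single-component test vector already forces $\lambda_2$ close to $k>\varrho$.)

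For the adjacency term I would invoke inequality \eqref{eq:star1} with $n$ replaced by $r$, giving $t\sum_e x^e=x^T(\mathcal Ax)\ge \varrho\|x\|_t^t-t(k-1)|S_r|g(r)^t$, and then the already-proved estimate $\frac{|S_r|g(r)^t}{\sum_{i=0}^r|S_i|g(i)^t}\le \frac{\hat g(r)^t}{\sum_{i=1}^r\hat g(i)^t}\to0$ from the proof of Theorem~\ref{thm_radius} shows the boundary loss is $o_n(1)\,\|x\|_t^t$. For the $J$-term, Hölder's inequality over the $|B_r|$ vertices of the support yields $\sum_v x_v\le \|x\|_t\,|B_r|^{1-1/t}$, whence $\tfrac{k}{n^{t-1}}\bigl(\sum_v x_v\bigr)^t\le k\,(|B_r|/n)^{t-1}\|x\|_t^t=o_n(1)\,\|x\|_t^t$ by the choice $|B_r|=o(n)$. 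Combining the two bounds and normalizing $\tilde x=x/\|x\|_t$ gives $\lambda_2(H)\ge(\tau_H-\tfrac{t|E|}{n^t}J)(\tilde x,\dots,\tilde x)\ge \varrho-o_n(1)$.

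The delicate point is the tension in the choice of $r$: the adjacency-boundary error decreases only as $r\to\infty$, whereas the $J$-error forces $|B_r|=o(n)$ and hence caps $r$ at roughly $\log n$. The logarithmic growth of balls is exactly what reconciles these, letting both errors be driven to $0$ simultaneously, and it is here that one must be sure a ball of the prescribed radius with $|B_r|=o(n)$ and all intermediate shells nonempty genuinely exists—which is precisely what the divergence of the diameter supplies. I expect this balancing, together with the routine check that the shell-growth inequality holds in a finite hypergraph, to be the only real obstacle; everything else is the computation already carried out for Theorem~\ref{thm_radius}.
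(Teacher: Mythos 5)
Your proof is correct, but it takes a genuinely different (and simpler) route than the paper. The paper does not use a single ball: it selects $s$ balls far apart on a diametral path, where $s$ is the smallest nontrivial divisor of $t$, and weights the $j$-th radial vector by $c_j\omega^{j-1}$ with $\omega$ a primitive $s$-th root of unity, the constants $c_j$ tuned so that $\sum_v y_v=1+\omega+\cdots+\omega^{s-1}=0$; this makes the $J$-term vanish exactly, and the condition $s\mid t$ is what guarantees $\omega^{t(j-1)}=1$ so that the per-ball adjacency estimates \eqref{eq:star1} still add up with positive coefficients. You observe instead that under the Friedman--Wigderson definition $\lambda_2(H)=\Vert\tau_H-\frac{t|E|}{n^t}J\Vert$ there is no orthogonality constraint at all: any unit vector is an admissible test vector, so it suffices that the $J$-term be $o(1)$ rather than exactly $0$, and H\"older on the support gives $\frac{k}{n^{t-1}}\bigl(\sum_v x_v\bigr)^t\le k\,(|B_r|/n)^{t-1}\Vert x\Vert_t^t$, which is $o_n(1)$ once $|B_r|=o(n)$. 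This is a real simplification: it removes the roots-of-unity bookkeeping and the divisor trick entirely, and your explicit dichotomy for disconnected hypergraphs is also cleaner than the paper's argument, which tacitly assumes connectivity by working with the diameter. What the paper's multi-ball construction buys is reusability: essentially the same construction, with several mutually distant supports, is exactly what is needed for the Serre-type extension (Theorem~\ref{thm_Serre hypergraphs}), where families of strongly orthogonal vectors are required, so the heavier machinery does double duty there. Two small repairs to your write-up: the explicit radius $r=\lfloor\frac12\log_{(t-1)(k-1)}n\rfloor$ is undefined when $t=k=2$ (then $(t-1)(k-1)=1$); taking $r$ maximal with $|B_r|\le\sqrt n$ works in all cases, and the deterministic bound $\frac{|S_r|g(r)^t}{\sum_{i=0}^r|S_i|g(i)^t}\le \hat g(r)^t/\sum_{i=1}^r\hat g(i)^t\to 0$ keeps the error uniform over all $n$-vertex hypergraphs. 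Also, in the small-components case the correct inequality is $k\ge\varrho$ rather than $k>\varrho$ (equality at $t=k=2$); it does hold, since by AM-GM $((t-1)(k-1))^{1/t}\le\frac{(t-1)(k-1)+(t-1)}{t}=\frac{(t-1)k}{t}$, whence $\varrho\le k$, so your conclusion stands.
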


\begin{proof}
Suppose the diameter of $H$ is $D$. Let $s$ be the smallest
positive nontrivial divisor of $t$, noting that $s>1$. Let
$d=\lfloor\frac{D}{2s-2}\rfloor-1$. We can select $s$
vertices $v_1, v_2, \ldots, v_s$  from a shortest path in $H$ joining two vertices at distance $D$ so
that $\dist(v_a, v_b)\ge 2d+2$ for any $1\le a < b \le s$. Classify the
vertices in $H$ according to their distance from $v_1, \ldots,v_s$ as follows:
\begin{align*}
    S_i^j &= \{w\in V(H) : \dist(w, v_j)=i\},\quad i=0,\ldots, d;~ j=1,\ldots, s;\\
    T     &= V(H)\setminus \cup_{0\leq i\leq d}\cup_{1\leq j\leq s} S_i^j.
\end{align*}

Let $\omega$ be the primitive $s^{\rm th}$ root of unity. Let
$U_j=\bigcup_{0\leq i\leq d} S_i^j$ for $j=1,\dots,s$. Define $s+1$
mappings $y_1,\ldots, y_s, y: V(H)\rightarrow \mathbb{C}$ as follows:
 \[
 y_j(u)=\left\{
   \begin{array}{cl}
     g(i), & \hbox{$u\in S_i^j,\ i=0,1,\ldots, d$;} \\
     0, & \hbox{otherwise,}
   \end{array}
 \right.
 \]
and
 \[
 y(u)=\left\{
   \begin{array}{cl}
     c_j\omega^{j-1}g(i), & \hbox{$u\in S_i^j,~ i=0,1,\ldots, d;~ j=1,\ldots, s$;} \\
     0, & \hbox{otherwise,}
   \end{array}
 \right.
 \]
where $c_1, c_2, \ldots, c_s$ are positive constants so that
$$c_1\sum_{u\in U_1} y_1(u) = c_2\sum_{u\in U_2} y_2(u) = \cdots = c_s\sum_{u\in U_s} y_s(u) = 1.$$
By this assumption, we have that
\begin{align*}
 \sum_{u\in V(H)}y(u)&=\sum_{u\in U_1}y(u)+\sum_{u\in U_2}y(u)+\cdots+\sum_{u\in U_s}y(u)+\sum_{u\in T}y(u)\\
&=c_1\sum_{u\in U_1}y_1(u)+c_2\omega\sum_{u\in U_2}y_2(u)+\cdots+c_s\omega^{s-1}\sum_{u\in U_s}y_s(u)+0\\
&=1+\omega+\cdots+\omega^{s-1}\\
&=0
\end{align*}
and it follows immediately that
$$y^T(Jy) = \sum_{v_1,v_2,\ldots,v_t}y_{v_1}y_{v_2}\cdots y_{v_t} = \Bigl(\sum_v y_v\Bigr)^t=0.$$

Observe that $y$ is a constant multiple of $y_j$ on each $U_j$ ($1\le j\le s$) and that $y_j$ is defined in the same way as the vector $x_n$ (with $n=d$) in the proof of Theorem \ref{thm_radius}. Moreover, vectors $y_1,\dots, y_s$ have disjoint supports $U_1,\dots,U_s$, and no vertex in $U_a$ is adjacent to any vertex in $U_b$ whenever $1\le a<b\le s$. The inequality (\ref{eq:star1}) from the proof of Theorem \ref{thm_radius} thus holds for each $y_j$. This implies the following:
\begin{align*}
  y^T(\mathcal Ay) &= \sum_{j=1}^s c_j^t \omega^{t(j-1)} y_j^T(\mathcal A y_j) \\
  &= \sum_{j=1}^s c_j^t y_j^T(\mathcal A y_j)\\
  &\geq \sum_{j=1}^s c_j^t (\varrho\|y_j\|_t^t - t(k-1)|S_d^j|\,g(d)^{t})\\
  &= \varrho\|y\|_t^t - \sum_{j=1}^s c_j^t t|S_d^j|(k-1)g(d)^{t},
\end{align*}
where $S_d^j$ denotes the set of vertices at distance $d$ from $v_j$ ($1\le j\le s$).
Thus
 \begin{align*}
   \lambda_2(H)&=\|\mathcal A - \tfrac{t|E(H)|}{n^t}J\|\\
&\geq \frac{y^T((\mathcal A-\frac{t|E(H)|}{n^t}J)y)}{\|y\|_t^t}\\
&=\frac{y^T(\mathcal Ay)}{\|y\|_t^t}\\
&\geq\varrho-t(k-1)\frac{\sum_{j=1}^sc_j^t|S_d^j|g(d)^{t}}{\sum_{j=1}^sc_j^t\sum_{i=0}^d|S_i^j|g(i)^t}.
 \end{align*}
As in the proof of Theorem~\ref{thm_radius}, we see that
$\frac{|S_d^j|g(d)^{t}}{\sum_{i=0}^d|S_i^j|g(i)^t}\rightarrow0$
with $d\rightarrow\infty$ for every $j=1,\ldots, s$. Let
$a_j:=|S_d^j|g(d)^{t}$ and $b_j=\sum_{i=0}^d|S_i^j|g(i)^t$ for
$j\in[s]$. By the inequality that
$\min\frac{a_j}{b_j}\leq\frac{a_1+a_2+\cdots+a_s}{b_1+b_2+\cdots+b_s}\leq\max\frac{a_j}{b_j}$,
we conclude that
$$t(k-1)\frac{\sum_{j=1}^sc_j^t|S_d^j|g(d)^{t}}{\sum_{j=1}^sc_j^t\sum_{i=0}^d|S_i^j|g(i)^t} = t(k-1)\frac{\sum_{j=1}^sc_j^ta_j}{\sum_{j=1}^sc_j^tb_j}\rightarrow0 \quad \hbox{as } d\rightarrow\infty.$$
As $n\rightarrow\infty$, we have
$D\rightarrow\infty$ and so $d = \lfloor\frac{D}{2s-2}\rfloor-1\rightarrow\infty$. This completes the proof.
\end{proof}

Serre \cite{Serre} gave a strengthening of the Alon-Boppana Theorem showing that a positive proportion of eigenvalues
of any $k$-regular graph must be bigger than $2\sqrt{k-1}-o(1)$. The above proof can be used to give the same kind of an extension to hypergraphs, except that it is not clear what would be the meaning of the third (fourth, fifth, etc.) largest eigenvalue of a hypergraph. For our purpose, the following definition seems to be generous enough.

Two vectors $x,y\in \mathbb{C}^V$ are \emph{strongly orthogonal\/} for a tensor $\mathcal A$ of order $t$ if the vectors $\mathcal{A}^p x$ and $\mathcal{A}^q y$ are orthogonal for every $p,q\in \{0,1,\dots,t\}$. For a positive integer $j$, let $\mathcal{X}_j$ be the set of all $\{x_1,\dots,x_j\}$ of $j$ complex vectors such that $\Vert x_l\Vert_t = 1$ for $l=1,\dots,j$ and $x_l$ and $x_m$ are strongly orthogonal for every $1\le l < m \le j$.
Then we define the $j$th multilinear value for $\mathcal A$ as
$$
  \mu_j(\mathcal{A}) = \sup_{X\in\mathcal{X}_j} \inf_{x\in X} x^T(\mathcal{A}x).
$$
(If $\mathcal{X}_j$ is empty, then we set $\mu_j(\mathcal{A}) = -\infty$.)
It can be shown that $\mu_1(\mathcal{A})$ and $\mu_2(\mathcal{A})$ correspond to $\lambda_1(H)$ and $\lambda_2(H)$ if $\mathcal A$ is the adjacency tensor for a regular $t$-uniform hypergraph $H$.

As mentioned above, the proof of Theorem \ref{thm_AlonBoppana hypergraphs} can be adapted to yield the following extension.

\begin{theorem}
\label{thm_Serre hypergraphs}
For every positive integer $j$ and every finite $k$-regular, $t$-uniform hypergraph $H$ on $n$ vertices, we have
$$\mu_j(H) \geq \frac{t}{t-1}((t-1)(k-1))^{1/t}-o_n(1).$$
\end{theorem}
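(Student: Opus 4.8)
The plan is to exhibit, for each sufficiently large $n$, a single family $X=\{\hat x_1,\dots,\hat x_j\}\in\mathcal X_j$ of mutually strongly orthogonal unit vectors and to bound $\min_{1\le l\le j}\hat x_l^T(\mathcal A\hat x_l)$ from below; since $\mu_j(\mathcal A)=\sup_X\inf_{x\in X}x^T(\mathcal Ax)$, producing one such family suffices. Each $\hat x_l$ will be a localized copy of the test vector from Theorem~\ref{thm_radius}, supported on a ball around a center $u_l$, with the centers placed far apart. The key conceptual difference from Theorem~\ref{thm_AlonBoppana hypergraphs} is that the vectors are never combined into one, so neither the divisibility $s\mid t$ nor the roots of unity play any role; the only genuinely new point is arranging that the supports stay disjoint even after applying powers of $\mathcal A$, which is exactly what strong orthogonality demands. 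The case $j=1$ is Theorem~\ref{thm_radius}, so I assume $j\ge2$. I may also assume $H$ is connected: if $H$ has $c\ge2$ components, the nonnegative top eigenvectors of distinct components are strongly orthogonal (their supports lie in distinct components and are preserved by $\mathcal A$) and each has Rayleigh quotient $\ge\varrho$ by Theorem~\ref{thm_radius}, so if $c\ge j$ we are done outright, and if $c<j$ we reduce to placing the remaining vectors inside a largest component, which has $\ge n/j$ vertices.

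First I would record how $\mathcal A$ moves supports. Writing $B(u,r)=\{v:\dist(u,v)\le r\}$, the formula $(\mathcal Az)_v=\sum_{e\in E_v}z^{e\setminus\{v\}}$ shows that $(\mathcal Az)_v\neq0$ forces an edge $e\ni v$ with $e\setminus\{v\}\subseteq\mathrm{supp}(z)$, hence $\dist(v,\mathrm{supp}(z))\le1$; iterating, $\mathrm{supp}(\mathcal A^pz)\subseteq\{v:\dist(v,\mathrm{supp}(z))\le p\}$. Consequently, if $\hat x_l$ and $\hat x_m$ are supported on $B(u_l,d)$ and $B(u_m,d)$ with $\dist(u_l,u_m)\ge 2(d+t)+1$, then for all $p,q\in\{0,\dots,t\}$ the supports of $\mathcal A^p\hat x_l$ and $\mathcal A^q\hat x_m$ lie in the disjoint balls $B(u_l,d+t)$ and $B(u_m,d+t)$, so these vectors are orthogonal. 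This is precisely strong orthogonality of $\hat x_l$ and $\hat x_m$.

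It remains to choose the centers and the radius. Since $H$ is $k$-regular and $t$-uniform with $k,t$ fixed, spheres grow by a bounded factor, $|S_{r+1}|\le k(t-1)|S_r|$, so $|B(o,r)|\le(k(t-1))^{r+1}$; as $H$ is connected on $n$ vertices this forces its diameter $D$ to satisfy $D\ge\log_{k(t-1)}n-1\to\infty$ as $n\to\infty$. Fixing a geodesic realizing $D$, I would take any $d$ with $d+t\le D/(2j)$ (so $d\to\infty$) and, exactly as in the proof of Theorem~\ref{thm_AlonBoppana hypergraphs}, select $j$ vertices $u_1,\dots,u_j$ on this geodesic with pairwise distance $\ge 2(d+t)+1$; this fits because $(j-1)(2(d+t)+1)\le D$ for large $D$. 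Setting $x_l(w)=g(\dist(w,u_l))$ for $\dist(w,u_l)\le d$ and $x_l(w)=0$ otherwise, and $\hat x_l=x_l/\Vert x_l\Vert_t$, the previous paragraph shows $\{\hat x_1,\dots,\hat x_j\}\in\mathcal X_j$.

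Finally, each $x_l$ is literally the truncated vector of Theorem~\ref{thm_radius} centered at $u_l$, so inequality~\eqref{eq:star1} applies verbatim and, after dividing by $\Vert x_l\Vert_t^t=\sum_{i=0}^d|S_i^l|g(i)^t$, gives
\[
\hat x_l^T(\mathcal A\hat x_l)\ \ge\ \varrho-t(k-1)\frac{|S_d^l|g(d)^t}{\sum_{i=0}^d|S_i^l|g(i)^t}\ \ge\ \varrho-t(k-1)\frac{a_d}{b_d},
\]
where $a_d,b_d$ are the sequences from the proof of Theorem~\ref{thm_radius}; by Stolz--Ces\`aro the last bound is independent of $l$ and tends to $0$ as $d\to\infty$. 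Taking the infimum over $l$ and then the supremum over admissible families yields $\mu_j(H)\ge\varrho-o_n(1)$. I expect the main obstacle to be purely the bookkeeping of support growth and spacing needed to guarantee strong orthogonality through all powers $p,q\le t$ — this is what forces the enlarged radius $d+t$ and the spacing $2(d+t)+1$ (rather than the radius $d$ and spacing $2d+2$ used for $\lambda_2$); once that is in place, the per-vector estimate is identical to that in Theorems~\ref{thm_radius} and~\ref{thm_AlonBoppana hypergraphs}.
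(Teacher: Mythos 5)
Your proof is correct, and it takes a cleaner route than the paper's own (sketched) proof. The paper reuses the test vectors of Theorem~\ref{thm_AlonBoppana hypergraphs} verbatim: each $x_l$ is supported on $s$ far-apart balls (with $s$ the smallest nontrivial divisor of $t$) and weighted by powers of a primitive $s$-th root of unity so that its entries sum to zero, which forces $sj$ centers in total. You correctly observe that this zero-sum device is needed in Theorem~\ref{thm_AlonBoppana hypergraphs} only to annihilate the term $\frac{t|E(H)|}{n^t}J$ in the definition of $\lambda_2(H)$, whereas $\mu_j$ is defined directly from $\mathcal{A}$ and demands nothing beyond unit $t$-norm and pairwise strong orthogonality; consequently single-center, nonnegative truncated radial vectors (the Theorem~\ref{thm_radius} vectors) already suffice, the roots of unity and the divisor $s$ disappear, and only $j$ centers are needed. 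The mechanism for strong orthogonality is the same in both arguments --- supports spread by at most one step per application of $\mathcal{A}$, so sufficiently spaced centers keep the supports of $\mathcal{A}^p\hat{x}_l$ and $\mathcal{A}^q\hat{x}_m$ disjoint for $p,q\le t$ --- and your per-vector estimate via \eqref{eq:star1} plus Stolz--Ces\`aro is identical to the paper's. Two details even favor your write-up: your center spacing $2(d+t)+1$ is the right one, while the paper's stated spacing $d+2t+1$ between centers would not even keep the radius-$d$ supports disjoint once $d>2t+1$ (evidently a slip for $2d+2t+1$); and you justify $D\to\infty$ via $D\ge\log_{k(t-1)}n-1$ and handle disconnected $H$, points the paper leaves implicit. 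What the paper's heavier construction buys is only compatibility with the asserted (but unproved) correspondence $\mu_2\leftrightarrow\lambda_2$: its vectors would remain valid witnesses if one strengthened $\mathcal{X}_j$ to require strong orthogonality to the all-ones vector, whereas yours would not; for the theorem as literally stated, your family is exactly what is required.
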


\begin{proof}
The proof is essentially the same as the proof of Theorem \ref{thm_AlonBoppana hypergraphs} with some added details. Thus we only give a sketch of the proof. First, we need sufficiently large diameter so that $j$ vectors $x_1,\dots, x_j$ can be found, whose form is as given in the proof of Theorem \ref{thm_AlonBoppana hypergraphs} and whose supports\footnote{The \emph{support} of a vector $x\in \mathbb{C}^V$ is the set of vertices $v$ for which $x_v\ne 0$.} are mutually at distance more than $2t$ from each other so that the supports of $\mathcal{A}^p x_l$ and $\mathcal{A}^q x_m$ are disjoint for every $l\ne m$ and every $p,q\in \{0,1,\dots,t\}$. Using the notation from the previous proof, we start by selecting $sj$
vertices $v_m^l$ ($m=1,\dots,s$ and $l=1,\dots,j$), mutually at distance at least $d+2t+1$ from each other. We use vertices $v_1^l,\dots,v_s^l$ to define the radial vectors $x_l$ ($l=1,\dots,j$) and use them to obtain a lower bound on $\mu_j(\mathcal{A})$. The details are left to the reader.
\end{proof}

The conclusion of Theorem \ref{thm_Serre hypergraphs} holds also when $j=j(n)$ increases with $n$. In fact, for every $\varepsilon>0$, there is a positive number $f(\varepsilon)$ such that $j$ can be as large as $\lfloor f(\varepsilon)n\rfloor$ when we want $\mu_j(H) \geq \frac{t}{t-1}((t-1)(k-1))^{1/t}-\varepsilon$.

\end{document}